\newcommand{\Q}{\mathbb{Q}}
\newcommand{\N}{\mathbb{N}}
\newcommand{\R}{\mathbb{R}}
\newcommand{\Z}{\mathbb{Z}}
\newcommand{\e}{\epsilon}
\renewcommand{\S}{\mathbb{S}}
\renewcommand{\P}{\mathbb{P}}
\newcommand{\E}{\mathbb{E}}
\newcommand{\D}{\mathcal{D}}
\newcommand{\clD}{\overline{\mathcal{D}}}
\newcommand{\B}{\mathcal{B}}
\newcommand{\A}{\mathcal{A}}
\newcommand{\T}{\mathcal{T}}
\renewcommand{\O}{\mathcal{O}}
\newcommand{\U}{\mathscr{U}}
\newcommand{\V}{\mathscr{V}}
\newcommand{\set}[1]{\left\{#1\right\}}
\newtheorem{thm}{Theorem}
\newtheorem{lmm}{Lemma}
\newtheorem{prp}{Proposition}
\title{Replica overlap and covering time for the Wiener sausages among Poissonian obstacles}
\author{Ryoki Fukushima\footnote{
              Division of Mathematics, Graduate School of Science, 
              Kyoto University} 
}
\begin{document}

\maketitle


\begin{abstract}
We study two objects concerning the Wiener sausage among Poissonian obstacles. 
The first is the asymptotics for the \textit{replica overlap}, which is the intersection of two independent Wiener sausages. 
We show that it is asymptotically equal to their union. 
This result confirms that the localizing effect of the media is so strong as to completely determine the motional range of particles. 
The second is an estimate on the \textit{covering time}. It is known that the Wiener sausage avoiding Poissonian obstacles 
up to time $t$ is confined in some `clearing' ball near the origin and almost fills it. 
We prove here that the time needed to fill the confinement ball has the same order as its volume. \\
\textbf{Keywords}: Brownian motion; Poissonian obstacles; Wiener sausage; covering time; replica overlap
\end{abstract}

\section*{Acknowledgement} 
I would like to thank Professor Nobuo Yoshida for a lot of helpful discussions and careful reading 
of the early version of the manuscript.

\section{Introduction}
We study two objects concerning the Wiener sausage among Poissonian obstacles. 
The first is the asymptotics for the \textit{replica overlap}, which is the intersection of two independent Wiener sausages. 
As in the mean field theory of spin glasses, it measures the strength of disorder. 
We show that it has asymptotically the same volume as the union of two Wiener sausages. 
Therefore, two independent Wiener sausages form the same shape, conditioned to avoid Poissonian obstacles. 
This result confirms that the localizing effect of the media is so strong as to completely determine 
the motional range of particles. 
The second is an estimate on the \textit{covering time}. It is known that the Brownian motion avoiding Poissonian obstacles 
up to time $t$ is typically confined in some `clearing' ball near the origin (see \cite{Szn91b}, \cite{Pov99}) 
and the author has shown in \cite{F1} that the corresponding Wiener sausage almost fills the ball. 
Since the volume of the confinement ball is smaller than the typical volume of the unconditional Wiener sausage 
at time $t$, it is natural to expect that the covering time is shorter than $t$. 
We prove this by showing that the covering time has the same order as the the volume of the confinement ball.  

\subsection{The model}
Let $(\Omega, \P_{\nu})$ be the Poisson point process of constant intensity $\nu$ on $\R^d$. 
We define the hard obstacles $S(\omega)=\bigcup_i (x_i+K)$ for a fixed nonpolar compact subset $K$ of $\R^d$ 
and $\Omega \ni \omega = \sum_i \delta_{x_i}$. Similarly, we define the soft obstacles 
$V(x,\omega)=\sum_i W(x-x_i)$ for a nonnegative, compactly supported and bounded measurable function $W$ 
which is not identically zero and $\Omega \ni \omega = \sum_i \delta_{x_i}$. 
Next, $((Z_t)_{t \ge 0}, P_x)$ denotes the standard Brownian motion starting from $x \in \R^d$. 
For an open set $U \subset \R^d$ and a closed set $F \subset \R^d$, $T_U= \inf \set{s \ge 0 \,;
\, Z_s \notin U}$ and $H_F = \inf \set{s \ge 0 \,;\, Z_s \in F}$ are the exit time 
of $U$ and the entrance time of $F$, respectively.

We define the annealed path measure for one particle by
\begin{equation*}
Q_t^{\mu, \nu}= \frac{1}{S_t^{\mu, \nu}}\exp \left\{-\int_0^t V(Z_s,\omega_1)ds \right\} 
1_{\{H_{S(\omega_2)}>t\}}\P_{\mu}^1 \otimes \P_{\nu}^2 \otimes P_0\label{1}
\end{equation*}
on $\Omega^2 \times C([0,t],\R^d)$ with $S_t^{\mu, \nu}$ the normalizing constant. 
Similarly, we also define the annealed path measure for two particles by 
\begin{equation*}
 \begin{split}
  \Q_t^{\mu, \nu}= \frac{1}{\S_t^{\mu, \nu}} &\exp \left\{-\int_0^t V(Z_s,\omega_1)ds-
  \int_0^t V(\Hat{Z}_s,\omega_1)ds \right\}\\
  &\hspace{60.5pt}1_{\{H_{S(\omega_2)}>t, \Hat{H}_{S(\omega_2)}>t\}} 
  \P_{\mu}^1 \otimes \P_{\nu}^2 \otimes P_0 \otimes \Hat{P}_0 \label{2}
 \end{split}
\end{equation*}
on $\Omega^2 \times C([0,t],\R^d)^2$ with $\S_t^{\mu, \nu}$ the normalizing constant. 
Finally, we introduce the Wiener sausage $W_t^C = \bigcup_{0 \le s \le t}(Z_s+C)$ associated with 
a compact set $C$. 

\subsection{Main results}
The first result in this article is that the intersection of two independent 
Wiener sausages is asymptotically equal to their union. 

\begin{thm}\label{Theorem1}
Let $d \ge 2$. Then for any $\eta>0$ and nonpolar compact set $C \subset \R^d$, 
 \begin{gather*}
  \lim_{t \to \infty}\Q_t^{\mu,\nu}\left(\left|t^{-\frac{d}{d+2}}|W_t^C \cup \Hat{W}_t^C|-
  2^{\frac{d}{d+2}}\omega_d R_0(d,\mu+\nu)^d \right|>\eta \right)=0,\label{Th1-1} \\
  \lim_{t \to \infty}\Q_t^{\mu,\nu}\left(\left|t^{-\frac{d}{d+2}}|W_t^C \cap \Hat{W}_t^C|-
  2^{\frac{d}{d+2}}\omega_d R_0(d,\mu+\nu)^d \right|>\eta \right)=0.\label{Th1-2}
 \end{gather*} 
 Here $R_0(d,\mu+\nu)>0$ is the radius of the ball 
which achieves the infimum $c(d,\mu+\nu)$ of the variational problem $\inf_{U:{\rm open}}\{(\mu+\nu)|U|+\lambda(U) \}$ 
with $\lambda(U)$ the principal eigenvalue of $-1/2\Delta$ on $H_0^1(U)$. 
\end{thm}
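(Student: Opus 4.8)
The plan is to combine the known path-confinement results for the annealed measure with a covering-type argument. By Sznitman's and Povel's work cited above, under $\Q_t^{\mu,\nu}$ both Brownian motions are, with probability tending to $1$, confined up to time $t$ in a common ball $B(x_t,\varrho_t)$ with $\varrho_t = R_0(d,\mu+\nu)\,t^{1/(d+2)}(1+o(1))$, where the (random) center $x_t$ satisfies the usual displacement bound; moreover the soft-obstacle potential and the hard-obstacle configuration inside this ball are, after rescaling, close to the optimal profile (the empty ball). I would first record this as a preliminary reduction: it suffices to prove the two limits with $W_t^C\cup\widehat W_t^C$ and $W_t^C\cap\widehat W_t^C$ replaced by subsets/supersets of $B(x_t,\varrho_t)$, since $|B(x_t,\varrho_t)| = \omega_d\varrho_t^d = 2^{\frac{d}{d+2}}\omega_d R_0(d,\mu+\nu)^d\,t^{\frac{d}{d+2}}(1+o(1))$ only after correcting the constant — note the factor $2^{d/(d+2)}$, which signals that the relevant ball for \emph{two} particles is the optimizer of $(\mu+\nu)|U| + 2\lambda(U)$, equivalently a ball of radius $2^{1/(d+2)}R_0$; I would restate the confinement step with that radius.

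The upper bound is the easy half: $W_t^C\cup\widehat W_t^C \subset B(x_t,\varrho_t+\mathrm{diam}(C))$ on the confinement event, so $t^{-d/(d+2)}|W_t^C\cup\widehat W_t^C| \le \omega_d\varrho_t^d\,t^{-d/(d+2)} + o(1)$, and $\mathrm{diam}(C)$ contributes nothing after rescaling. Since the intersection is contained in the union, the same upper bound holds for $|W_t^C\cap\widehat W_t^C|$. The core of the argument is therefore the matching \emph{lower bound for the intersection}: I must show that each Wiener sausage individually fills all but a vanishing (rescaled) fraction of the confinement ball. For a single sausage this is exactly the content of the author's earlier paper \cite{F1}; I would invoke that result (suitably adapted to the two-particle measure $\Q_t^{\mu,\nu}$, whose confinement ball has the enlarged radius $2^{1/(d+2)}R_0$) to conclude that with $\Q_t^{\mu,\nu}$-probability tending to $1$, $|B(x_t,\varrho_t)\setminus W_t^C| = o(t^{d/(d+2)})$ and likewise for $\widehat W_t^C$. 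Then
\[
 |W_t^C\cap\widehat W_t^C| \ge |B(x_t,\varrho_t)| - |B(x_t,\varrho_t)\setminus W_t^C| - |B(x_t,\varrho_t)\setminus\widehat W_t^C| = \omega_d\varrho_t^d - o(t^{d/(d+2)}),
\]
which gives the desired lower bound and, a fortiori, the same lower bound for the union. Combining with the upper bound proves both displays.

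The main obstacle is that the filling result of \cite{F1} is stated for the one-particle measure $Q_t^{\mu,\nu}$, and I need it under $\Q_t^{\mu,\nu}$. The point to check is that, conditioned on the pair of confinement balls coinciding and on the rescaled environment inside being near-optimal, the marginal law of each path under $\Q_t^{\mu,\nu}$ is comparable to the law under $Q_t^{\mu,\nu}$ with the appropriately dilated clearing — so that the covering estimate transfers. Concretely I would: (i) prove that the two balls coincide with high $\Q_t^{\mu,\nu}$-probability (otherwise the cost $(\mu+\nu)|U_1|+(\mu+\nu)|U_2|+2\lambda$ is strictly larger than the optimum, a standard variational strictness argument); (ii) condition on a typical realization of the environment and of the ball, reducing to a product of two one-particle problems in a fixed near-optimal clearing of radius $\approx 2^{1/(d+2)}R_0\,t^{1/(d+2)}$; (iii) apply the quantitative covering bound from \cite{F1} — whose proof only uses that the motion is a Brownian motion conditioned to stay in a ball whose radius grows like $t^{1/(d+2)}$, a scaling regime in which the exit time from the ball is of order $t$ and a Brownian path of length $t$ in a ball of volume $\asymp t^{d/(d+2)}$ does cover all but a lower-order part — and sum over the finitely many (after discretization) admissible balls and environments. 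Steps (i) and (iii) are where the real work lies; (ii) is bookkeeping. Everything else (the $o(1)$ estimates, absorbing $\mathrm{diam}(C)$, the union bound) is routine.
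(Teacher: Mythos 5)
Your high-level reduction matches the paper's: prove a two-particle confinement property (the paper's Theorem~2, with the ball of radius $\approx 2^{1/(d+2)}R_0(d,\mu+\nu)\,t^{1/(d+2)}$), deduce the upper bound for the union, then obtain the matching lower bound for the intersection by showing each sausage individually fills the ball and using the elementary inequality $|W_t^C\cap\widehat W_t^C|\ge|B|-|B\setminus W_t^C|-|B\setminus\widehat W_t^C|$. You also correctly identify that the factor $2^{d/(d+2)}$ comes from the variational problem $\inf\{(\mu+\nu)|U|+2\lambda(U)\}$ rather than $\inf\{(\mu+\nu)|U|+\lambda(U)\}$.

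Where you diverge is in the crucial filling step, and I think your route has a real gap. You propose to \emph{transfer} the one-particle filling result of \cite{F1} to $\Q_t^{\mu,\nu}$ by (i) showing the two confinement balls coincide, (ii) conditioning on the environment to factorize into two one-particle problems, and (iii) applying the quantitative covering bound from \cite{F1}. The difficulty is that the filling result in \cite{F1} is itself an \emph{annealed} statement and is proved via a Laplace-transform estimate, not a path-wise covering argument. Once you condition on the environment as in (ii), you are in the quenched setting with a fixed near-optimal clearing, and the cited result does not apply off the shelf; to carry out (iii) you would effectively have to re-derive the exponential moment estimate in the new setting. Your step (i) is also not how the paper proceeds: Theorem~2 constructs a single environment-dependent ball $B(\omega_1,\omega_2)$ in which both paths are confined, so there are never two separate balls to reconcile.

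The paper instead proves a fresh two-particle annealed asymptotic (Lemma~1):
\[
\log \E^1_{\mu} \otimes \E^2_{\nu} \otimes E_0 \otimes \Hat{E}_0\left[u(t)\exp\left\{-\lambda|W_t^C|\right\}\right] \sim -2c\!\left(d,\tfrac{\mu+\nu+\lambda}{2}\right)t^{\frac{d}{d+2}},
\]
proved by the method of enlargement of obstacles with the variational problem $\inf_{U_1\subset U_2}\{\lambda(U_1)+\lambda(U_2)+\lambda|U_1|+(\mu+\nu)|U_2|\}$. Combining this with the normalizing constant $\S_t^{\mu,\nu}=\exp\{-2c(d,(\mu+\nu)/2)t^{d/(d+2)}+o(t^{d/(d+2)})\}$ and Chebyshev yields
\[
\Q_t^{\mu,\nu}\left(t^{-\frac{d}{d+2}}|W_t^C|\le m\right)\le \exp\left\{\lambda\Bigl(m-\tfrac{2c(d,(\mu+\nu+\lambda)/2)-2c(d,(\mu+\nu)/2)}{\lambda}\Bigr)t^{\frac{d}{d+2}}(1+o(1))\right\},
\]
and then the identity $\tfrac{\partial}{\partial\nu}c(d,\nu)=\omega_d R_0(d,\nu)^d$ (from \cite{F1}, eq.~(12)) extracts the sharp constant $2^{d/(d+2)}\omega_d R_0(d,\mu+\nu)^d$ as $\lambda\to 0$. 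This is more direct than your conditioning scheme: the annealed computation automatically selects the correct enlarged clearing radius, no marginal-comparison lemma is needed, and the exact constant falls out of the derivative of $c$. If you want to salvage your approach, you would need to either quote a quenched filling result from \cite{F1} (which would require checking it is stated for a fixed clearing of the appropriate radius and with sufficient uniformity to sum over environments), or simply redo the Laplace-transform estimate directly, which is what the paper's Lemma~1 accomplishes.
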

\vspace{11pt}\noindent
Theorem 1 gives the asymptotics for the volume but we are also able to determine the shape. 
It is a consequence of the next theorem. 

\begin{thm}\label{Theorem2}(Confinement property of two particles)\\
Let $d \ge 2$. There exist constants $\kappa_1 >1$ and $0<\kappa_2<1$ and for each 
$(\omega_1,\omega_2) \in \Omega^2$ a ball $B(\omega_1,\omega_2)$ with center in 
$B(0, 2^{1/(d+2)}R_0(d,\mu+\nu)+\kappa_1 t^{-\kappa_2/(d+2)})$ and radius in 
$[2^{1/(d+2)}R_0(d,\mu+\nu),2^{1/(d+2)}R_0(d,\mu+\nu)+\kappa_1 t^{-\kappa_2/(d+2)}]$ 
such that
\begin{equation*}
 \lim_{t \to \infty}\Q_t^{\mu,\nu}\left(T_{t^{1/(d+2)}B(\omega_1,\omega_2)}>t,
 \Hat{T}_{t^{1/(d+2)}B(\omega_1,\omega_2)}>t\right)=1.\label{5}
\end{equation*}
\end{thm}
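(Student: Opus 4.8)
The plan is to run a two-particle version of the confinement argument of Sznitman and Povel (\cite{Szn91b}, \cite{Pov99}) for a single Brownian motion among Poissonian obstacles, the one genuinely new phenomenon being that the pair is cheaper to keep alive inside a \emph{single} common clearing than inside two separate ones, and this is exactly what produces the dilation factor $2^{1/(d+2)}$. The algebraic backbone is the variational identity: for all $d\ge2$ and all open $U_1,U_2\subset\R^d$,
\begin{equation*}
 (\mu+\nu)\,|U_1\cup U_2|+\lambda(U_1)+\lambda(U_2)\ \ge\ 2^{d/(d+2)}\,c(d,\mu+\nu),
\end{equation*}
with equality precisely when $U_1=U_2$ is, up to a null set and a translation, a ball of radius $2^{1/(d+2)}R_0(d,\mu+\nu)$. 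This is elementary from Faber--Krahn: writing $v_i=|U_i|$ and using $\lambda(U_i)\ge(\omega_d/v_i)^{2/d}\lambda(B(0,1))$, $|U_1\cup U_2|\ge\max(v_1,v_2)$ and $v_1^{-2/d}+v_2^{-2/d}\ge2\max(v_1,v_2)^{-2/d}$, the left side is at least $(\mu+\nu)w+2\omega_d^{2/d}\lambda(B(0,1))w^{-2/d}$ with $w=\max(v_1,v_2)$, whose minimum in $w>0$ is $2^{d/(d+2)}c(d,\mu+\nu)$, attained at $w=\omega_d(2^{1/(d+2)}R_0)^d$; the equality case is read off from the three inequalities. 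Equivalently, $\rho=2^{1/(d+2)}R_0$ minimizes $(\mu+\nu)\omega_d\rho^d+2\lambda(B(0,1))\rho^{-2}$, the analogue of the one-particle problem with a doubled eigenvalue term.

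I would next fix the two-particle partition function. For the lower bound, force $\omega_1,\omega_2$ to be empty in a bounded neighbourhood of $B(0,2^{1/(d+2)}R_0\,t^{1/(d+2)})$ and both motions to stay in that ball up to time $t$; the Poissonian volume estimate and the ball eigenvalue give $\S_t^{\mu,\nu}\ge\exp(-2^{d/(d+2)}c(d,\mu+\nu)\,t^{d/(d+2)}(1+O(t^{-\delta})))$ for an explicit $\delta>0$. For the complementary event I would use the method of enlargement of obstacles. Because $Z$ and $\Hat Z$ are independent given $(\omega_1,\omega_2)$ --- so the clearing and its Poissonian cost are shared while the eigenvalue cost is incurred twice --- the annealed weight of the event that the rescaled ranges $t^{-1/(d+2)}(\set{Z_s:s\le t}\cup\set{\Hat Z_s:s\le t})$ are not contained in a single ball of radius $2^{1/(d+2)}R_0+\kappa_1 t^{-\kappa_2/(d+2)}$ is bounded, up to subexponential errors controlled by the spectral- and volume-control estimates, by $\exp(-t^{d/(d+2)}I)$, where $I$ is the infimum of $(\mu+\nu)|U_1\cup U_2|+\lambda(U_1)+\lambda(U_2)$ over admissible (enlarged) clearings $U_1\supset t^{-1/(d+2)}\set{Z_s},\ U_2\supset t^{-1/(d+2)}\set{\Hat Z_s}$ that are not jointly contained in such a ball.

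The final step quantifies the strict inequality above using the reinforcement of the Faber--Krahn inequality. If $(\mu+\nu)|U_1\cup U_2|+\lambda(U_1)+\lambda(U_2)\le2^{d/(d+2)}c(d,\mu+\nu)(1+\e)$, then propagating the slack through the three inequalities pins $v_1,v_2$ and $|U_1\cup U_2|$ within $O(\sqrt\e)$ of $\omega_d(2^{1/(d+2)}R_0)^d$, so $|U_1\triangle U_2|=O(\sqrt\e)$, while the quantitative stability of Faber--Krahn makes each $U_i$ $L^1$-close to a ball of radius $\approx2^{1/(d+2)}R_0$; the regularizing (``fatness'') effect of the enlargement then upgrades this $L^1$-closeness to the inclusion of each clearing --- hence of each Brownian range --- in a common ball of radius $2^{1/(d+2)}R_0+O(\sqrt\e)$ (which we may always take of radius at least $2^{1/(d+2)}R_0$), whose centre, having to contain the common start $0$, lies in $B(0,2^{1/(d+2)}R_0+O(\sqrt\e))$. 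Taking $\e=\e(t)$ of order $t^{-2\kappa_2/(d+2)}$ with $0<\kappa_2<\min\{1,d/2\}$ makes $O(\sqrt\e)\le\kappa_1 t^{-\kappa_2/(d+2)}$ for a suitable $\kappa_1>1$ and keeps $t^{d/(d+2)}\e\to\infty$ faster than every correction term; dividing the bad-event bound by the lower bound on $\S_t^{\mu,\nu}$ then produces a quantity of order $\exp(-\mathrm{const}\cdot t^{(d-2\kappa_2)/(d+2)})\to0$, which is the assertion.

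The hard part is not the variational computation but making the enlargement-of-obstacles reduction quantitatively sharp enough: the errors from the enlargement procedure, from first confining the two motions to a box of side $O(t^{1/(d+2)})$, and from coarse-graining over that box must all be $o(t^{d/(d+2)}\e(t))$ rather than merely $o(t^{d/(d+2)})$, and this is what dictates the admissible range of $\kappa_2$. The second delicate point, specific to two particles, is the passage from the $L^1$-closeness furnished by the reinforced Faber--Krahn inequality to an actual containment of the ranges: this is where one needs the enlarged obstacles to be ``fat'' at the enlargement scale, so that thin tunnels are closed off and a path cannot slip out to a second, far-away clearing without paying in the eigenvalue term --- a point which is exactly quantified by the volume-control estimate.
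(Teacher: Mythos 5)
Your plan is correct and follows essentially the same route as the paper. The key structural observation — that conditioned on the environment the two Brownian motions are i.i.d., so the Poissonian clearing cost is paid once while the spectral cost is paid twice, leading to the cost functional $(\mu+\nu)|U|+2\lambda(U)$ — is exactly the paper's two‑particle modification of Proposition~1 of \cite{F1}, and your variational computation that this has infimum $2c\bigl(d,\tfrac{\mu+\nu}{2}\bigr)=2^{d/(d+2)}c(d,\mu+\nu)$ attained at a ball of radius $2^{1/(d+2)}R_0(d,\mu+\nu)$ is the same one. The one harmless detour is that you set up the infimum over a \emph{pair} of clearings $U_1,U_2$ and then argue that near the minimum $|U_1\triangle U_2|$ is forced small; the paper short‑circuits this because the enlarged‑obstacle set $\mathscr{U}(\omega_1,\omega_2)$ is built from the environment alone (not from the paths), so both particles automatically see the \emph{same} $\mathscr{U}$, and the squared‑semigroup identity $E_0\otimes\Hat E_0\bigl[v(\tau)\Hat v(\tau);T_{\mathcal T}>\tau,\Hat T_{\mathcal T}>\tau\bigr]=E_0\bigl[v(\tau);T_{\mathcal T}>\tau\bigr]^2$ gives the factor $2$ on $\lambda(\mathscr U)$ directly, with no need to pass through a two‑set variational problem. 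Beyond that, the published proof defers almost entirely to \cite{F1} (its Propositions 1 and 3 and the reinforced Faber--Krahn inequality), and you have correctly flagged every place the one‑particle machinery must be adapted: the lower bound on $\S_t^{\mu,\nu}$ by forcing a common clearing ball of radius $\approx 2^{1/(d+2)}R_0\,t^{1/(d+2)}$, the quantitative error budget from the enlargement procedure that dictates the admissible $\kappa_2$, and the upgrade from $L^1$‑stability of the minimizer to an actual exit‑time estimate on a slightly dilated ball (the role of Proposition~3 in \cite{F1}).
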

\vspace{11pt}\noindent
Actually, combining Theorem 1 and Theorem 2, we have 
\begin{equation*}
 W_t^C \cup \Hat{W}_t^C, W_t^C \cap \Hat{W}_t^C \sim t^{\frac{1}{d+2}}B(\omega_1,\omega_2)\label{6}
\end{equation*}
in `measurable sense', that is, the symmetrical differences have small volumes. 

The second object is an estimate on the covering time of the confinement ball by the single Wiener sausage. 
If one considers the Wiener sausage conditioned to stay in the ball of radius $R_0(d,\nu)t^{1/(d+2)}$, 
it takes not longer than $t^{d/(d+2)}$ to cover almost all the area of the ball. 
This can be proved by the same argument as to prove Proposition 3.2.7 in \cite{Szn98}.   
In view of the confinement property (see \cite{F1} and references therein), we expect 
that the same estimate holds for our model. 
The following result gives an answer in the special case $C^{\circ} \neq \emptyset$. 
\begin{thm}\label{Theorem3}
Let $d \ge 2$ and assume that $C$ has nonempty interior. 
Then for any $d/(d+2) < \sigma \le 1$ and $\eta>0$, 
\begin{equation}
 \lim_{t \to \infty}Q_t^{\mu,\nu}\left(\left|t^{-\frac{d}{d+2}}|W_{t^{\sigma}}^C|-\omega_d 
 R_0(d,\mu+\nu)^d \right|>\eta \right)=0.\label{7}
\end{equation}
Conversely, for any $\sigma<d/(d+2)$ and $\eta>0$,
\begin{equation}
 \lim_{t \to \infty}Q_t^{\mu,\nu}\left(\left|t^{-\frac{d}{d+2}}|W_{t^{\sigma}}^C| \right|>\eta \right)=0.\label{8}
\end{equation}
\end{thm}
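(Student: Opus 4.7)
The plan is to combine the single-particle confinement (the $Q_t^{\mu,\nu}$ analog of Theorem~\ref{Theorem2}, essentially established in \cite{F1}) with a cover-time estimate for Brownian motion in the confinement ball. Write $R_0 := R_0(d,\mu+\nu)$ for brevity and introduce the natural rescaling
\[
Y_s := t^{-1/(d+2)} Z_{s t^{2/(d+2)}}, \qquad T := t^{\sigma - 2/(d+2)}, \qquad r := t^{-1/(d+2)},
\]
so that $|W_{t^\sigma}^C| = t^{d/(d+2)} \, \bigl| \bigcup_{0\le s\le T}(Y_s + rC) \bigr|$. Under $Q_t^{\mu,\nu}$, the one-particle confinement restricts $Y$ to a ball of radius close to $R_0$ with probability tending to one, so throughout we may work on that event.

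\textbf{Proof of \eqref{7}.} The upper bound is immediate from $W_{t^\sigma}^C \subset W_t^C$ together with the $\sigma = 1$ shape theorem of \cite{F1}, which gives $t^{-d/(d+2)} |W_t^C| \to \omega_d R_0^d$ in $Q_t^{\mu,\nu}$-probability. For the lower bound, the rescaled process $Y$ is a Brownian motion in a ball of radius close to $R_0$, and the question reduces to covering it by a sausage of shrinking radius $r \to 0$. The typical cover time of a fixed-radius ball by such a sausage is of order $r^{-(d-2)} \log(1/r)$ for $d \ge 3$ and $\log^2(1/r)$ for $d = 2$; under the hypothesis $\sigma > d/(d+2)$, the horizon $T = t^{\sigma - 2/(d+2)}$ dominates both. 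A direct adaptation of the cover-time argument of \cite[Proposition 3.2.7]{Szn98} then yields $\bigl|\bigcup_{0 \le s\le T}(Y_s+rC)\bigr| \ge (1-\eta)\omega_d R_0^d$ with high probability. Transferring this from a free Brownian motion in the ball to the tilted measure $Q_t^{\mu,\nu}$ is achieved by splitting $[0,t] = [0,t^\sigma] \cup [t^\sigma, t]$, applying the Markov property at $t^\sigma$, and absorbing the resulting Feynman--Kac/survival factor against the sharp annealed asymptotic $S_t^{\mu,\nu} = \exp\bigl(-(c(d,\mu+\nu)+o(1)) t^{d/(d+2)}\bigr)$.

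\textbf{Proof of \eqref{8}.} Here we need the opposite estimate: the rescaled sausage volume tends to $0$. The standard upper bound on the expected sausage volume (see \cite[Ch.~2]{Szn98}) gives, for a Brownian motion $Y$,
\[
\E\Bigl|\bigcup_{0\le s\le T}(Y_s+rC)\Bigr| \le c\bigl(r^d + r^{d-2} T\bigr) = c\bigl(t^{-d/(d+2)} + t^{\sigma-d/(d+2)}\bigr)
\]
when $d \ge 3$, with an extra $\log(1/r)$ correction when $d = 2$. For $\sigma < d/(d+2)$ this vanishes, so the rescaled sausage volume goes to zero in $L^1$ under free Brownian motion, hence in probability. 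The estimate is transferred to $Q_t^{\mu,\nu}$ by the same Markov decomposition at $t^\sigma$ as above, with the survival factor on $[t^\sigma, t]$ controlled by its annealed asymptotic.

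\textbf{Main obstacle.} The principal difficulty is the cover-time estimate underlying \eqref{7}: the sausage radius shrinks as $r = t^{-1/(d+2)} \to 0$ and one needs a cover-time bound uniform in $r$, whereas \cite[Proposition 3.2.7]{Szn98} is phrased for a fixed sausage. The second delicate point is the passage from free Brownian motion to the tilted measure $Q_t^{\mu,\nu}$, where the cover event on $[0,t^\sigma]$ and the Feynman--Kac survival weight on $[t^\sigma, t]$ must be decoupled cleanly; the sharp annealed asymptotic of $S_t^{\mu,\nu}$ from \cite{Szn98,F1} is precisely the tool that makes this decoupling possible.
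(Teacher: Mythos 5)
Your sketch of the upper bound of \eqref{7} (inclusion $W_{t^\sigma}^C\subset W_t^C$ plus the $\sigma=1$ shape theorem from \cite{F1}) is correct and is a cleaner route than what the paper states explicitly. But the two more substantive steps have genuine gaps. For the lower bound of \eqref{7}, you reduce to covering a ball by a free Brownian sausage. This silently assumes the confinement ball $B(\omega_1,\omega_2)$ is obstacle-free, which it is not: the tilted process inside $B$ is killed by $V_\epsilon$ and by $S_\epsilon$, and there are generally regions of $B$ the process cannot visit quickly (or at all). This is exactly the difficulty the paper singles out, and it is why the actual proof covers the level set $\{\phi^\epsilon_{\omega_1,\omega_2}>s\}\cap\mathscr W$ rather than the ball itself, controls the volume of that set via Lemma 2 (using Faber--Krahn and spectral control from \cite{F1}), and gets the per-ball hitting rate not from free Brownian cover times but from the spectral shift $\lambda^{\epsilon,i}_{\omega_1,\omega_2}-\lambda^\epsilon_{\omega_1,\omega_2}\gtrsim s^2h(\epsilon)$ (Lemma 4, via the capacity/eigenvalue perturbation bound from \cite{Szn98}). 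The Harnack inequality enters to guarantee that the small balls $B_q$ lie inside a level set where $\phi^\epsilon$ is bounded below, so this rate is usable. Your "free Brownian motion in the ball" picture gives the right exponent threshold $\sigma>d/(d+2)$, but it does not survive the presence of obstacles and does not yield the quantitative bound needed to beat the union over polynomially many $B_q$ and to absorb $1/S_t^{\mu,\nu}$.

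For \eqref{8} the gap is sharper: the $L^1$ bound $E_0|W^C_{t^\sigma}|\lesssim t^{\sigma-d/(d+2)}\cdot t^{d/(d+2)}$ gives, via Markov's inequality, only polynomial decay $P_0(|W^C_{t^\sigma}|>\eta t^{d/(d+2)})=O(t^{\sigma-d/(d+2)})$. But since $u(t)\le1$, the only available domination is $Q_t^{\mu,\nu}(A)\le P_0(A)/S_t^{\mu,\nu}$ and $1/S_t^{\mu,\nu}=\exp\{c(d,\mu+\nu)t^{d/(d+2)}+o(t^{d/(d+2)})\}$ blows up exponentially. So polynomial smallness of $P_0$ is useless; you need $P_0(A)\le\exp\{-(c(d,\mu+\nu)+\delta)t^{d/(d+2)}\}$. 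Your proposed "Markov decomposition at $t^\sigma$" does not repair this, because the survival weight $E_{Z_{t^\sigma}}[u(t-t^\sigma)]$ still depends on the same $\omega$ (and on the random endpoint $Z_{t^\sigma}$), so one cannot simply replace it by the annealed constant $S_{t-t^\sigma}^{\mu,\nu}$ and cancel. The paper sidesteps this entirely by invoking the van den Berg--T\'oth exponential moment $\lim_t t^{-1}\log E_0[\exp\{\lambda|W_t^C|\}]=S(\lambda,r)<\infty$ and a Chebyshev argument with $\lambda$ chosen large (namely $\lambda=2c(d,\mu+\nu)/\eta$), which gives the needed superexponential decay of $P_0(|W^C_{t^\sigma}|>\eta t^{d/(d+2)})$ precisely because $S(\lambda,r)t^\sigma$ is negligible relative to $t^{d/(d+2)}$ when $\sigma<d/(d+2)$.
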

\vspace{11pt}\noindent
{\bf Remark.} In two dimensional case, it has been shown in \cite{Szn91b} that there are no obstacles 
in the slightly smaller ball concentric to the confinement ball. It may be known that one can give a simpler proof of Theorem 3 
using this fact in two dimensional case. However, since our proof is also applicable to two dimensional case, 
we have included it to Theorem 3. 

\vspace{11pt}
The outline of the article is as follows. Firstly we shall prove Theorem 2, which also implies the upper bounds 
of Theorem 1. Since this part is very similar to the one particle case, we only give the outline of the proof. 
Once we have shown Theorem 2, it suffices for the lower bounds to show that both $|W_t^C|$ and $|\Hat{W}_t^C|$ 
have the same volume as the confinement ball. We prove it considering exponential moments of $|W_t^C|$ as in \cite{F1}. 
Next, we show the upper bound of Theorem 3 following the argument to prove Proposition 3.2.7 in \cite{Szn98}. 
The main difficulty in our case is that the confinement ball need not be completely clear so that the process may 
avoid some parts of the ball for a long time. To get over this point, we consider the covering time of 
a nice set approximating the confinement ball instead. 
Finally, the lower bound of Theorem 3 follows from an exponential estimate for the Wiener sausage
which is due to van den Berg and T\'{o}th \cite{vdBT91}. 

\section{Proof of Theorem 2}
In this section, we shall give the proof of Theorem 2. 
As pointed out in the introduction, it is very similar to the one particle case. 
Let us start by introducing the Brownian scaling with scale $\e=t^{1/(d+2)}$. Under this scaling, we use the notation 
$\T=(-[t],[t])^d$ and 
\begin{equation*}
 v(\tau)=\exp \left\{-\int_{0}^{\tau}V_{\e}(Z_s, \omega_1)ds \right\}
 1_{\{H_{S_{\e}(\omega_2)}>\tau\}}
\end{equation*}
where $\tau=t \e^2$, $V_{\e}(x,\omega)=\e^{-2}\sum_i W((x-x_i)/\e)$ and $S_{\e}(\omega)=\bigcup_i (x_i+\e K)$. 
We further introduce the notation for the scaled version of the annealed path measure  
\begin{equation*}
 \Q^{\mu, \nu}_{t, \e}=\frac{1}{\S_t^{\mu,\nu}} v(\tau) \Hat{v}(\tau)
 {\P^1_{\mu \e^{-d}} \otimes \P^2_{\nu \e^{-d}} \otimes P_0 \otimes \Hat{P}_0} 
\end{equation*}
to simplify the presentation. 
Then, as in \cite{F1}, Theorem 2 follows once we have shown
\begin{equation*}
 \Q^{\mu, \nu}_{t, \e} 
 \left(T_{B(\omega_1,\omega_2)}\wedge\Hat{T}_{B(\omega_1,\omega_2)}>\tau \right) \to 1\label{9}
\end{equation*}
as $t \to \infty$. Since we have 
\begin{equation*}
 \lim_{t \to \infty}\frac{1}{t}\log{\Q^{\mu, \nu}_{t, \e}} 
 \left( T_{\T}\le \tau\textrm{ or }\Hat{T}_{\T}\le \tau \right) < 0\label{10}
\end{equation*}
from a standard estimates on Brownian motion, we restrict our consideration on 
$\{T_{\T}>\tau, \Hat{T}_{\T}>\tau \}$ in the sequel. Let us introduce the open set 
\begin{equation*}
 \mathscr{U}(\omega_1,\omega_2)=(\T \cap \O(\omega_1,\omega_2))\setminus \clD(\omega_1,\omega_2)\label{11}
\end{equation*}
and take the same parameters $\alpha_0$, $\beta$, $\rho$ and $\kappa$ as in \cite{F1}.  
Then, we have following constraint on this set. 
\begin{prp}\label{Proposition4}
Pick $\chi \in (0,1)$ such that
\begin{equation*}
 \chi >\max\left(\beta+\alpha_0,1-\left(\frac{\kappa}{d}-\alpha_0\right),
 1-\frac{\rho}{d}\right)\label{12}
\end{equation*}
and let
\begin{gather*}
 \alpha_1<\min(d(1-\chi),1),\label{13}\\
 C_1(a,d,\mu+\nu)>2(1+\gamma(a,d,\mu+\nu)),\label{14}
\end{gather*}
where $a=\inf\{u>0\,;\, {\rm supp}W \cup K \subset B(0,u)\}$ and $\gamma$ is a constant. 
Then we have 
\begin{equation*}
 \begin{split}
  &\Q^{\mu, \nu}_{t, \e}
  \left( T_{\T}>\tau, \Hat{T}_{\T}>\tau,
  (\mu+\nu)|\mathscr{U}|+ 2\lambda(\mathscr{U})>2^{\frac{d}{d+2}}c(d,\mu+\nu)+C_1\e^{\alpha_1}\right)\\
  &\le \exp\left\{-(1+\gamma)t^{\frac{d-\alpha_1}{d+2}}\right\}\label{15}
 \end{split}
\end{equation*}
as $t \to \infty$. 
\end{prp}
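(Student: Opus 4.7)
The strategy is to adapt the one-particle spectral control argument of \cite{F1} to the two-particle setting. The key observation is that, conditionally on the obstacle configuration $(\omega_1,\omega_2)$, the two Brownian motions are independent, so the integrand $v(\tau)\Hat v(\tau)$ factorizes into a product of two identical expressions. Both motions then see the same coarse-grained clearing set $\mathscr{U}(\omega_1,\omega_2)$, and the Dirichlet eigenvalue $\lambda(\mathscr{U})$ enters the resulting upper bound twice---once per particle---producing the coefficient $2$ in the stated constraint.

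First I would run the method of enlargement of obstacles of \cite{F1}, with the same parameters $\alpha_0,\beta,\rho,\kappa$, simultaneously for both motions: $\clD(\omega_1,\omega_2)$ collects boxes densely populated by either soft or hard obstacles (so lethal for either particle), while $\O(\omega_1,\omega_2)$ excludes boxes visited too rarely; on the event $\{T_\T>\tau,\Hat T_\T>\tau\}$ both paths can then be regarded as confined to $\mathscr{U}$, up to events of negligible annealed weight. Second, I would apply the semigroup/eigenvalue estimate of \cite{F1}---essentially $E_x[v(\tau)1_{\{T_{\mathscr{U}}>\tau\}}]\le c\,\e^{-\kappa'}e^{-\tau\lambda(\mathscr{U})}$---to each Brownian motion separately, so that the product contributes a factor $e^{-2\tau\lambda(\mathscr{U})}$. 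Third, integration over the Poisson measure yields the combined cost $\exp\{-\e^{-d}(\mu+\nu)|\mathscr{U}|\}$, because the soft and hard fields are independent with respective intensities $\mu\e^{-d}$ and $\nu\e^{-d}$, so their costs add to give the correct factor $(\mu+\nu)$ in front of $|\mathscr{U}|$.

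To conclude, I would divide by the matching lower bound
\[
\S_t^{\mu,\nu}\ge \exp\Bigl\{-\bigl(2^{d/(d+2)}c(d,\mu+\nu)+o(1)\bigr)t^{d/(d+2)}\Bigr\},
\]
obtained from a test configuration with both particles constrained to a common cleared ball of radius $2^{1/(d+2)}R_0(d,\mu+\nu)$ in rescaled coordinates---the minimizer of the two-particle variational problem $\inf_U\{(\mu+\nu)|U|+2\lambda(U)\}$, whose value is $2^{d/(d+2)}c(d,\mu+\nu)$ by a direct scaling computation from the one-particle minimizer. Combining the upper and lower bounds, the exponential excess implied by the hypothesized gap $C_1\e^{\alpha_1}$ yields the claimed bound $\exp\{-(1+\gamma)t^{(d-\alpha_1)/(d+2)}\}$, with the condition $C_1>2(1+\gamma)$ precisely absorbing the polynomial prefactors from the spectral estimate and the sub-exponential correction from the lower bound.

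The main obstacle is running the enlargement of obstacles simultaneously for the soft field $\omega_1$ and the hard field $\omega_2$ in a way that produces a single clearing set $\mathscr{U}$ for which both the Poisson cost splits cleanly as $(\mu+\nu)|\mathscr{U}|$ and the spectral estimate applies uniformly. The parameter constraint on $\chi$ in the statement is tailored so that the density fluctuations in both Poisson fields and the rare-visit events of both Brownian motions are collectively negligible at scale $\e^{\alpha_1}$; once this coarse-graining is in place, the spectral step is a mechanical application of the one-particle machinery of \cite{F1} to each factor of the product, and the two-particle lower bound is obtained by placing two independent copies in the same forced clearing.
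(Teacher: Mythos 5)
Your proposal follows essentially the same route as the paper's: you identify the key factorization $E_0\otimes\Hat E_0[v(\tau)\Hat v(\tau); T_\T>\tau,\Hat T_\T>\tau]=E_0[v(\tau);T_\T>\tau]^2$ for fixed obstacles, run the enlargement-of-obstacles machinery of \cite{F1} once to produce the common clearing set $\mathscr{U}$, apply the spectral estimate to each factor to get $e^{-2\tau\lambda(\mathscr{U})}$, charge the Poisson cost $(\mu+\nu)|\mathscr{U}|$, and normalize by the lower bound on $\S_t^{\mu,\nu}$ coming from forcing a shared cleared ball of radius $2^{1/(d+2)}R_0(d,\mu+\nu)$, which realizes the two-particle variational value $\inf_U\{(\mu+\nu)|U|+2\lambda(U)\}=2^{d/(d+2)}c(d,\mu+\nu)$. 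This is precisely how the paper argues: it refers back to Proposition 1 of \cite{F1} and records only the two changes you isolate, namely the modified $\gamma$ from the normalizing-constant lower bound and the squared semigroup giving the factor $2$ in front of $\lambda$.
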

\vspace{11pt}\noindent
The proof of this proposition is essentially the same as Proposition 1 in \cite{F1}. 
We have to change only two parts. The first is the value of $\gamma(a,d,\mu+\nu)$ which comes from 
the lower bound on the normalizing constant. See \eqref{25} in section 3 for this. 
The second is that we have the squared semigroup 
\begin{equation*}
 E_0 \otimes \Hat{E}_0 \left[ v(\tau) \Hat{v}(\tau) \,;\,T_{\T}>\tau, \Hat{T}_{\T}>\tau\right]
 = E_0 \left[ v(\tau) \,;\,T_{\T}>\tau \right]^2\label{16}
\end{equation*}
for fixed $(\omega_1,\omega_2)$. As a result, we have another variational problem 
\begin{equation*}
 \inf_{U:{\rm open}}\set{(\mu+\nu)|U|+2\lambda(U)}\label{17}
\end{equation*}
which achieves minimum value $2c(d,(\mu+\nu)/2)=2^{d/(d+2)}c(d,\mu+\nu)$ at $U=B(0,R_0(d,(\mu+\nu)/2))$. 

Once this Proposition has been proved, the rest of the proof is just the same as in \cite{F1}. 
Indeed, we can construct confinement ball $B_l$ (whose radius is $2^{1/(d+2)}R_0+l$) with the help of the reinforcement 
of Faber-Krahn's inequality and also can prove 
\begin{equation*}
 \begin{split}
  &\Q_{t,\e}^{\mu,\nu}\left(T_{B_l(\omega_1,\omega_2)} \le \tau \textrm{ or } \Hat{T}_{B_l(\omega_1,\omega_2)} \le \tau\right)\\
  \le &\, 2\Q_{t,\e}^{\mu,\nu}\left( T_{B_l(\omega_1,\omega_2)} \le \tau \right)\\
  \le &\, \exp\left\{-c_2l t^{\frac{\alpha_3}{d+2}}\right\} \label{18}
 \end{split}
\end{equation*}
using the obvious version of Proposition 3 in \cite{F1}. 
Taking $l=t^{-\alpha_4/(d+2)}$($\alpha_4 < \alpha_3$), this implies Theorem 2.

\section{Lower estimate of Theorem 1}
In this section, we are going to show the lower estimate of Theorem 1. 
Let $u(t)$ denote the `killing term' 
\begin{equation*}
 \exp \left\{-\int_0^t V(Z_s,\omega_1)ds-\int_0^t V(\Hat{Z}_s,\omega_1)ds \right\}
 1_{\{H_{S(\omega_2)}>t, \Hat{H}_{S(\omega_2)}>t\}}\label{19}
\end{equation*}
to simplify the notation. 
The key ingredient is following asymptotic estimate. 
\begin{lmm}\label{Lemma 1}
For $0 \le \lambda < \mu+\nu$ and nonpolar compact set $C \subset \R^d$,
 \begin{equation}
  \log \E^1_{\mu} \otimes \E^2_{\nu} \otimes E_0 \otimes \Hat{E}_0\left[u(t)\exp \left\{ -\lambda|W_t^C| \right\}\right] 
  \sim -2c\left(d,\frac{\mu+\nu+\lambda}{2} \right) t^{\frac{d}{d+2}}.\label{20}
 \end{equation}
\end{lmm}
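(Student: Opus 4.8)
The plan is to establish \eqref{20} by matching upper and lower bounds, both obtained through a combination of the scaling $\e = t^{1/(d+2)}$ used in Section 2 and the variational/Faber--Krahn machinery imported from \cite{F1}. After Brownian rescaling, the left-hand side of \eqref{20} becomes $\log$ of a quantity of the form $\E^1_{\mu\e^{-d}} \otimes \E^2_{\nu\e^{-d}} \otimes E_0 \otimes \Hat E_0[v(\tau)\Hat v(\tau)\exp\{-\lambda\e^{-d}|W_\tau^{C/\e}|\}]$; the extra term $\lambda\e^{-d}|W_\tau^{C/\e}|$ will, in the relevant regime, behave like $\lambda$ times the volume of the scaled confinement set, so the effective cost seen by one particle is $(\mu+\nu+\lambda)|U| + 2\lambda(U)$ rather than $(\mu+\nu)|U|+2\lambda(U)$, explaining the constant $2c(d,(\mu+\nu+\lambda)/2)$.

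For the upper bound, I would first note that by the same standard Brownian estimate used for Proposition~\ref{Proposition4} we may restrict to $\{T_\T > \tau, \Hat T_\T > \tau\}$, where the squared-semigroup identity $E_0\otimes\Hat E_0[v(\tau)\Hat v(\tau);\,T_\T>\tau,\Hat T_\T>\tau] = E_0[v(\tau);\,T_\T>\tau]^2$ applies. The key point is that on the event where the sausage $W_t^C$ is confined (which, by Theorem~\ref{Theorem2} and the confinement arguments of \cite{F1}, is where essentially all the mass lies), $|W_t^C|$ is comparable to $t^{d/(d+2)}$ times the volume of the region $\mathscr U$ occupied by the path, so that $\lambda|W_t^C| \approx \lambda t^{d/(d+2)}|\mathscr U|$. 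Feeding this into the eigenvalue bound of the Proposition~\ref{Proposition4}-type estimate, with $(\mu+\nu)|\mathscr U|+2\lambda(\mathscr U)$ replaced by $(\mu+\nu+\lambda)|\mathscr U|+2\lambda(\mathscr U)$, and optimizing over open sets gives the infimum $\inf_{U}\{(\mu+\nu+\lambda)|U|+2\lambda(U)\} = 2c(d,(\mu+\nu+\lambda)/2)$. The outlier events (not confined, or $|W_t^C|$ much smaller than $|\mathscr U|t^{d/(d+2)}$ because the sausage hasn't yet filled its range) must be shown to contribute a negligible exponential term; this is where the covering-time heuristic enters and this is the step I expect to be the main obstacle, since here we cannot assume the result of Theorem~\ref{Theorem3} — but the required bound is only that these events are \emph{not dominant}, which is weaker and should follow from a union bound over a polynomial number of candidate balls together with the strong-Markov decomposition used in \cite{F1}.

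For the lower bound, I would use the standard strategy of restricting the expectation to an explicit favorable event: choose the optimal ball $U^\ast = B(0,R_0(d,(\mu+\nu+\lambda)/2))$ (rescaled by $t^{1/(d+2)}$), impose that there are no hard obstacles and small soft-obstacle potential in a slightly enlarged concentric ball (a Poissonian cost of order $\exp\{-(\mu+\nu)|U^\ast|t^{d/(d+2)}(1+o(1))\}$, which supplies the $\mu+\nu$ part of the rate), and require both Brownian particles to stay inside $U^\ast$ up to time $t$ (each contributing $\exp\{-\lambda(U^\ast)t^{d/(d+2)}(1+o(1))\}$, i.e.\ the factor $2\lambda(U^\ast)$). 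On this event $W_t^C \subset t^{1/(d+2)}U^\ast$ deterministically, so $\exp\{-\lambda|W_t^C|\} \ge \exp\{-\lambda|U^\ast|t^{d/(d+2)}\}$ without further work; combining the three contributions and matching $(\mu+\nu+\lambda)|U^\ast| + 2\lambda(U^\ast) = 2c(d,(\mu+\nu+\lambda)/2)$ gives the lower bound. The normalizing constant $\S_t^{\mu,\nu}$ plays no role here since \eqref{20} is stated for the un-normalized expectation, but the same lower-bound computation — which is precisely \eqref{25} referenced in Section~3 — is what pins down the constant $\gamma(a,d,\mu+\nu)$ entering Proposition~\ref{Proposition4}. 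Finally I would remark that the condition $\lambda < \mu+\nu$ is used only to ensure $R_0(d,(\mu+\nu+\lambda)/2) > 0$ and that the variational problem is well-posed; it is not otherwise essential to the argument.
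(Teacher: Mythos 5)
Your lower bound is essentially the paper's, up to a cosmetic difference: you bound $|W_t^C|$ deterministically by the volume of the confinement ball, whereas the paper packages the same estimate as ``no points of an auxiliary cloud $\omega_3$ in the ball.'' The upper bound, however, has a genuine gap. The device you are missing is the Poisson representation
\begin{equation*}
 \exp\set{-\lambda|W_t^C|}
 = \E^3_{\lambda}\left[1_{\set{H_{\Tilde S(\omega_3)}>t}}\right],
 \qquad \Tilde S(\omega_3)=\bigcup_i (x_i-C),
\end{equation*}
valid path-by-path, where $\omega_3=\sum_i\delta_{x_i}$ is an independent Poisson cloud of intensity $\lambda$ and the obstacle shape is $-C$. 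This converts the exponential volume penalty into an obstacle-avoidance event, so the full expectation in \eqref{20} becomes a three-cloud annealed survival probability, and the method of enlargement of obstacles applies directly to the configuration $(\omega_1,\omega_2,\omega_3)$ with no need to ever lower-bound $|W_t^C|$. Without this identity, your plan hinges on showing that, on the dominant event, $|W_t^C|$ is at least $|\U|\,t^{d/(d+2)}(1-o(1))$; you correctly flag this as ``the main obstacle,'' but then merely assert that a union bound over candidate balls ``should'' handle it. In fact that claim is a covering-time statement in disguise, and making it rigorous would mean reproducing the spectral-shift and Harnack-inequality analysis of Section~4 \emph{inside} the proof of Lemma~\ref{Lemma 1}, which reverses the paper's logical order and would constitute a substantial detour rather than a shortcut.

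A second structural point is also missed: because only $Z$ (not $\Hat Z$) interacts with $\omega_3$, the paper runs the enlargement procedure \emph{twice}, once for $(\omega_1,\omega_2,\omega_3)$ and once for $(\omega_1,\omega_2)$, producing nested clearings $\U_1\subset\U_2$ and eigenvalues $\lambda^\e_{\omega_1,\omega_2,\omega_3}(\U_1)$, $\lambda^\e_{\omega_1,\omega_2}(\U_2)$. The pointwise estimate is therefore governed by the two-set variational problem $\inf_{U_1\subset U_2}\{\lambda(U_1)+\lambda(U_2)+\lambda|U_1|+(\mu+\nu)|U_2|\}$, whose minimizer happens to satisfy $U_1=U_2$, which is what collapses it to the one-ball problem $\inf_U\{(\mu+\nu+\lambda)|U|+2\lambda(U)\}=2c(d,(\mu+\nu+\lambda)/2)$ that you wrote down. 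You landed on the right constant, but the two-set formulation is needed to apply the spectral and volume controls on the correct domains, since the two particles genuinely see different effective obstacle sets. Finally, your remark on the role of $\lambda<\mu+\nu$ is off: $R_0(d,\cdot)$ is positive for every positive intensity, so the restriction is not about well-posedness of the variational problem; it is tied to the Chebyshev step following the lemma and the range over which the enlargement parameters can be kept uniform.
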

\vspace{11pt}\noindent
Indeed, this lemma and Chebyshev's inequality shows 
\begin{equation*}
 \begin{split}
  &\Q_t^{\mu,\nu}\left(t^{-\frac{d}{d+2}}|W_t^C| \le m \right)\\
  \le &\exp \left\{ \lambda\left(m-\frac{2c\left(d,\frac{\mu+\nu+\lambda}{2}\right)-2c\left(d,\frac{\mu+\nu}{2}\right)}{\lambda}\right)
  t^{\frac{d}{d+2}}(1+o(1))\right\} \label{21}
 \end{split}
\end{equation*}
and consequently, it follows that 
\begin{equation*}
 \lim_{t \to \infty}\Q_t^{\mu,\nu}\left(t^{-\frac{d}{d+2}}|W_t^C|\le
 2^{d/(d+2)} \omega_d R_0^d - \eta \right)=0\label{22}
\end{equation*}
for any $\eta >0$. Here we have used (12) of \cite{F1}:
\begin{equation*} 
 \frac{\partial}{\partial \nu}c(d,\nu)=\omega_d R_0(d,\nu)^d.
\end{equation*}

\vspace{11pt}\noindent
\noindent{\it Proof of Lemma 1.} First of all, note that 
\begin{equation*}
 \begin{split}
  &\E^1_{\mu} \otimes \E^2_{\nu} \otimes E_0 \otimes \Hat{E}_0\left[u(t)\exp \set{-\lambda |W_t^C|}\right]\\
  =\,&\E^1_{\mu \e^{-d}} \otimes \E^2_{\nu \e^{-d}} \otimes \E^3_{\lambda \e^{-d}} \otimes E_0 \otimes \Hat{E}_0
   \left[v(\tau)\Hat{v}(\tau)\,;\, H_{\Tilde{S}_{\e}(\omega_3)}>\tau \right]\label{23}
 \end{split}
\end{equation*}
where $\Tilde{S}_{\e}(\omega)=\bigcup_i (x_i-\e C)$ for $\omega=\sum_i \delta_{x_i}$. 
Let ${\bold P}_{\epsilon}$ denote $\P^1_{\mu \e^{-d}} \otimes \P^2_{\nu \e^{-d}} \otimes \P^3_{\lambda \e^{-d}}$ 
and $\bold E_{\epsilon}$ the corresponding expectation for simplicity.
To show the lower bound, we consider the specific event 
\begin{equation*}
 \begin{split}
  A_R&=\set{(\omega_1+\omega_2+\omega_3)(B(0,R+a\e))=0,T_{B(0,R)}>\tau,\Hat{T}_{B(0,R)}>\tau}\\ 
     &\subset \set{v(\tau)=1, \Hat{v}(\tau)=1, H_{\Tilde{S}_{\e}(\omega_3)}>\tau}\label{24}
 \end{split}
\end{equation*}
where $a=\inf\{u>0\,;\, {\rm supp}W \cup K \cup C \subset B(0,u)\}$. 
Then, setting $R=R_0(d,(\mu+\nu+\lambda)/2)$ and using well known eigenfunction expansion, we have 
\begin{equation}
 \begin{split}
  &{\bold P}_{\epsilon}\otimes P_0 \otimes \Hat{P}_0(A_R)\\
  \ge &\, {\rm const}(d)\exp \set{-2\frac{\lambda_d}{R^2}\tau-(\mu+\nu+\lambda)\omega_d(R+a\e)^d \tau}\\
  \ge &\, {\rm const}(d)\exp \set{-2c\left(d,\frac{\mu+\nu+\lambda}{2}\right)\tau-\gamma\tau^{\frac{d-1}{d}}}\label{25}
 \end{split}
\end{equation}
for some constant $\gamma(a,d,\mu+\nu+\lambda)$ and the lower bound of \eqref{20} follows. 
To prove the upper bound, we use the `method of enlargement of obstacles'. 
See section 3.1 of \cite{F1} for the notation and results. 
From now on, we fix the admissible collection of parameters
\begin{equation*}
 \alpha,\beta,\gamma,\delta,L,\rho,\kappa\label{26}
\end{equation*}
and pick 
\begin{gather*}
 M=4c\left(d,\frac{\mu+\nu+\lambda}{2}\right),\label{27}\\
 0<r<r_0(M),\label{28}\\
 R \in \N \textrm{ with } c_3(d)\left[ \frac{R}{4r} \right] \in [\log t, \log t+1),\label{29}\\
 n_0 \in \N  \textrm{ with } \frac{\mu+\nu+\lambda}{2}n_0 r^d \in [M, M+1).\label{30}
\end{gather*}
Here $c_3(d)$ is the constant used in \cite{F1}. 
Using these parameters, we set
\begin{equation*}
\begin{split}
 \D_1=\D_{\e}(\omega_1,\omega_2,\omega_3), \B_1=\B_{\e}(\omega_1,\omega_2,\omega_3), \\
 \A_1=\A_{\e}(\omega_1,\omega_2,\omega_3), \O_1=\O_{\e}(\omega_1,\omega_2,\omega_3), \label{31}
\end{split}
\end{equation*}
and
\begin{equation*}
\begin{split}
 \D_2=\D_{\e}(\omega_1,\omega_2), \B_2=\B_{\e}(\omega_1,\omega_2), \\
 \A_2=\A_{\e}(\omega_1,\omega_2), \O_2=\O_{\e}(\omega_1,\omega_2). \label{32}
\end{split}
\end{equation*}
Now let us define the {\textit{essential part}} by
\begin{equation*}
 E=\set{\lambda_{\omega_1,\omega_2,\omega_3}^{\e}(\T) \le M, |\A_2 \cap 2\T| \le n_0, T_{\T}>\tau, \Tilde{T}_{\T}>\tau}.\label{33}
\end{equation*}
It can easily be seen that 
\begin{equation}
 \D_1 \supset \D_2, \D_1\cup\B_1 \supset \D_2\cup\B_2, \A_1 \subset \A_2, \O_1 \subset \O_2\label{34}
\end{equation}
from the definition of these sets. Therefore, if $(\omega_1,\omega_2,\omega_3) \in E$ then we have 
$\lambda_{\omega_1,\omega_2}^{\e}(\T) \le M$ and $|\A_1 \cap 2\T| \le n_0$. 
We can also show that $E$ is essential, namely 
\begin{equation*}
 \limsup_{t \to \infty} \frac{1}{\tau} \log{\bold E}_{\epsilon}\otimes E_0 \otimes \Hat{E}_0
 \left[v(\tau)\Hat{v}(\tau)\,;\, H_{\Tilde{S}_{\e}(\omega_3)}>\tau, E^c \right] \le -M, \label{35}
\end{equation*}
by the same argument as to show Lemma 4.5.5 of \cite{Szn98}. 
For $(\omega_1,\omega_2,\omega_3) \in E$ we set 
\begin{equation*}
 \U_i=(\T \cap \O_i)\setminus \D_i,\V_i=(\T \cap \O_i) \setminus (\D_i \cup \B_i) \quad (i=1,2)\label{36}
\end{equation*}
so that $\U_1 \subset \U_2$ and $\V_1 \subset \V_2$ from \eqref{34} and $(\omega_1+\omega_2+\omega_3)(\V_1)=(\omega_1+\omega_2)(\V_2)=0$. 
Moreover, it follows from the volume control of \cite{F1} that
\begin{equation*}
 |\U_i| \le |\V_i| + |\T \cap \O_i| \e^{\kappa} \le |\V_i| + (2R+1)^d n_0 \e^{\kappa}.\label{37}
\end{equation*}
Now let us introduce the covering ${\mathcal G}_t$ of $E$ made of the events 
\begin{equation*}
 G_{U_1,V_1,U_2,V_2}=\set{\U_i=U_i, \V_i=V_i,i=1,2}\label{38}
\end{equation*}
which intersect with $E$. Then the cardinality of ${\mathcal G}_t$ is of order $\exp\set{o(\tau)}$ like 
(4.5.78) of \cite{Szn98}. Therefore the proof of the upper bound is reduced to `pointwise estimate', 
i.e.\ the estimate on each $G_{U_1,V_1,U_2,V_2}$: 
\begin{equation*}
 \begin{split}
  &{\bold E}_{\epsilon}\otimes E_0 \otimes \Hat{E}_0\left[v(\tau)\Hat{v}(\tau)\,;\,
  H_{\Tilde{S}_{\e}(\omega_3)}>\tau, G_{U_1,V_1,U_2,V_2}\cap E \right]\\
  \le \,& c(d)^2\left(1+(M\tau)^{\frac{d}{2}} \right)^2 {\bold E}_{\epsilon}
  \Bigl[\exp\set{-\left(\lambda^{\e}_{\omega_1,\omega_2,\omega_3}(\T)\wedge M+\lambda^{\e}_{\omega_1,\omega_2}(\T)\wedge M\right)\tau}\,;\\
  &\hspace{308pt}G_{U_1,V_1,U_2,V_2} \Bigr]\\
  \le \,& {\bold E}_{\epsilon}\Bigl[\exp\set{-\left(\lambda^{\e}_{\omega_1,\omega_2,\omega_3}(\T \cap \O_1)\wedge M
  +\lambda^{\e}_{\omega_1,\omega_2}(\T \cap \O_2)\wedge M+o(1)\right)\tau}\,;\\
  &\hspace{297pt}G_{U_1,V_1,U_2,V_2} \Bigr]\\
  \le \,& {\bold E}_{\epsilon}\Bigl[\exp\set{-\left(\lambda^{\e}_{\omega_1,\omega_2,\omega_3}(\U_1)\wedge M
  +\lambda^{\e}_{\omega_1,\omega_2}(\U_2)\wedge M+o(1)\right)\tau}\,;\,G_{U_1,V_1,U_2,V_2} \Bigr]\\
  \le \,& \exp\set{-\left(\lambda(U_1)\wedge M+\lambda(U_2)\wedge M+o(1)\right)\tau}
  {\bold P}_{\epsilon}\left(\omega_3(V_1)=0,\omega_1+\omega_2(V_2)=0\right)\\
  \le \,& \exp\set{-\left((\lambda(U_1)+\lambda(U_2)+\lambda|U_1|+(\mu+\nu)|U_2|+o(1))\wedge M\right)\tau}.\label{39}
 \end{split}
\end{equation*}
Here we have used (3.1.9) of \cite{Szn98} in the second line, spectral control III of \cite{F1} in the third line, 
spectral control I of \cite{F1} in the fourth line and $V_1 \subset V_2$ in the fifth line. 
The upper bound on the last line comes from 
\begin{equation*}
 \begin{split}
  &\inf_{U_1 \subset U_2:{\rm open}}\set{\lambda(U_1)+\lambda(U_2)+\lambda|U_1|+(\mu+\nu)|U_2|}\\
  &\hspace{2.2pt}=\inf_{R_1 \le R_2}
  \set{\frac{\lambda_d}{R_1^2}+\frac{\lambda_d}{R_2^2}+\lambda\omega_d R_1^d+(\mu+\nu)\omega_d R_2^d}.\label{40}
 \end{split}
\end{equation*}
A little calculus shows that this variational problem attains the infimum $2c(d,(\mu+\nu+\lambda)/2)$ 
at $r=R=R_0(d,(\mu+\nu+\lambda)/2)$ and the proof of Lemma 1 is completed. 
\hfill $\square$

\section{Estimates on the covering time}
We shall prove Theorem 3 in this section. 
Throughout this section, we adopt usual scaling with $\e= t^{1/(d+2)}$ and only consider 
$(\omega_1,\omega_2)$ for which the confinement property holds. 
Moreover, we use the method of enlargement of obstacles with the same parameters as in \cite{F1}. 
Under these settings, we let $B$ denote the scaled confinement ball 
$B(\omega_1,\omega_2)$ in Theorem 1 of \cite{F1}, $\lambda_{\omega_1,\omega_2}^{\e}$ the principal eigenvalue 
of $-1/2\Delta+V_{\e}(\,\cdot\,,\omega_1) $ on $H_0^1(B \setminus S_{\e}(\omega_2))$ and $\phi_{\omega_1,\omega_2}^{\e}$ the 
corresponding $L^2$-normalized positive eigenfunction. Finally, we introduce the scaled path measure
\begin{equation*}
 Q_{t,\e}^{\mu,\nu}=\frac{1}{S_t^{\mu,\nu}}v(\tau)\P^1_{\mu \e^{-d}} \otimes \P^2_{\nu \e^{-d}} \otimes P_0 
\end{equation*}
as in section 2. 

Let us start by recalling the asymptotics for the normalizing constant:
\begin{equation}
 S_t^{\mu,\nu}=\exp\set{-c(d,\mu+\nu)t^{\frac{d}{d+2}}+o\left(t^{\frac{d}{d+2}} \right)} \quad (t \to \infty),\label{41}
\end{equation}
which we will use in the sequel (see for instance (3) in \cite{F1}). 
Now, we shall prove two lemmas to approximate $B \setminus S_{\e}$ 
by nice sets. The first is the level set of the eigenfunction $\phi_{\omega_1,\omega_2}^{\e}$. 

\begin{lmm}
For any $\e_1>0$, there exists $\Omega_{t}(\e_1) \subset \Omega^2$ such that 
 \begin{gather}
  \lambda_{\omega_1,\omega_2}^{\e} \le \lambda(B(0,R_0(d,\mu+\nu)))+\e_1 \textrm{ \rm{for} } (\omega_1,\omega_2) 
  \in \Omega_t(\e_1),\label{42}\\
  \lim_{t \to \infty}Q_{t,\e}^{\mu,\nu} \left( \Omega_{t}(\e_1) \right) = 1.\label{44}    
 \end{gather}
Moreover, when $C_2$ is large enough depending only on the dimension and $\mu+\nu$, we have 
 \begin{equation}
  |\{\phi_{\omega_1,\omega_2}^{\e}>s\}| \ge |B(0,R_0(d,\mu+\nu))|(1-C_2(\e_1+s))\label{43}
 \end{equation}
for any $(\omega_1,\omega_2) \in \Omega_t(\e_1)$ and $s>0$. 
\end{lmm}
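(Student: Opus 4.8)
The plan is to prove the three assertions in order, exploiting the variational characterization of $\lambda_{\omega_1,\omega_2}^{\e}$ and the confinement property from Theorem 1 of \cite{F1}. For \eqref{42} and \eqref{44}, I would first recall that, by the asymptotics \eqref{41} for the normalizing constant together with the lower bound construction (essentially \eqref{25} with $\lambda=0$), the path measure $Q_{t,\e}^{\mu,\nu}$ concentrates on configurations for which the principal eigenvalue $\lambda_{\omega_1,\omega_2}^{\e}$ of $-\tfrac12\Delta+V_{\e}(\cdot,\omega_1)$ on $H_0^1(B\setminus S_{\e}(\omega_2))$ is close to its minimal possible value. Concretely, if $\lambda_{\omega_1,\omega_2}^{\e}>\lambda(B(0,R_0(d,\mu+\nu)))+\e_1$ on a set of configurations carrying non-negligible $Q_{t,\e}^{\mu,\nu}$-mass, then the Feynman–Kac representation of $S_t^{\mu,\nu}$ restricted to that set would be bounded above by $\exp\{-(c(d,\mu+\nu)+\delta(\e_1))t^{d/(d+2)}\}$ for some $\delta(\e_1)>0$, contradicting \eqref{41}. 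Defining $\Omega_t(\e_1)$ to be exactly the set where \eqref{42} holds then gives \eqref{44}. This is the step I expect to require the most care, since one must combine the eigenvalue lower bound (coming from the method of enlargement of obstacles and the variational problem $\inf_U\{(\mu+\nu)|U|+\lambda(U)\}$ restricted to subsets of the confinement ball) with the volume control to rule out the bad event uniformly; but it is parallel to the one-particle confinement argument already invoked for Theorem 2.

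For the volume bound \eqref{43}, the key is a reverse-Faber–Krahn / stability estimate: since the confinement ball $B$ has radius close to $R_0(d,\mu+\nu)$ and, on $\Omega_t(\e_1)$, the eigenvalue $\lambda_{\omega_1,\omega_2}^{\e}$ on $B\setminus S_{\e}(\omega_2)$ exceeds that of the true ball by at most $\e_1$, the normalized eigenfunction $\phi_{\omega_1,\omega_2}^{\e}$ must be $L^2$-close to the ground state $\varphi_0$ of $B(0,R_0(d,\mu+\nu))$. I would make this quantitative: writing the Rayleigh quotient and expanding $\phi_{\omega_1,\omega_2}^{\e}$ in the eigenbasis of the unperturbed ball, the spectral gap forces $\|\phi_{\omega_1,\omega_2}^{\e}-\varphi_0\|_{L^2}^2\le C\e_1$ (up to sign). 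Since $\varphi_0$ is a fixed smooth function, bounded below by a positive constant on any compact subset of $B(0,R_0(d,\mu+\nu))$, the superlevel set $\{\varphi_0>2s\}$ has volume at least $|B(0,R_0(d,\mu+\nu))|(1-Cs)$ for small $s$. Combining these via Chebyshev/Markov applied to $|\{\phi_{\omega_1,\omega_2}^{\e}\le s\}\cap\{\varphi_0>2s\}|$, which is controlled by $s^{-2}\|\phi_{\omega_1,\omega_2}^{\e}-\varphi_0\|_{L^2}^2\le C\e_1 s^{-2}$, would at first only give a bound of the wrong form; the fix is to optimize and instead use that $\phi_{\omega_1,\omega_2}^{\e}$ and $\varphi_0$ are both normalized, so that $\int(\phi_{\omega_1,\omega_2}^{\e}-\varphi_0)^2\le C\e_1$ directly yields $|\{\phi_{\omega_1,\omega_2}^{\e}>s\}|\ge|\{\varphi_0>2s\}|-Cs^{-2}\e_1$ and then, reabsorbing, $|\{\phi_{\omega_1,\omega_2}^{\e}>s\}|\ge|B(0,R_0(d,\mu+\nu))|(1-C_2(\e_1+s))$ after choosing $C_2$ large enough depending only on $d$ and $\mu+\nu$ (through $\varphi_0$ and the spectral gap of the ball).

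I would also need the elementary geometric input that the confinement ball's radius lies in $[R_0(d,\mu+\nu),R_0(d,\mu+\nu)+\kappa_1 t^{-\kappa_2/(d+2)}]$ — available from Theorem 1 of \cite{F1} — so that comparing eigenfunctions on $B$ and on $B(0,R_0(d,\mu+\nu))$ costs only an $o(1)$ error absorbed into $\e_1$; and that the soft-obstacle potential $V_{\e}$ only raises the Rayleigh quotient, so the eigenfunction comparison goes through with $V_{\e}\ge 0$ handled by monotonicity. The main obstacle, as noted, is the concentration step \eqref{44}: making precise that a near-optimal eigenvalue is forced $Q_{t,\e}^{\mu,\nu}$-almost surely, which hinges on having the matching upper bound for $S_t^{\mu,\nu}$ stratified by the value of $\lambda_{\omega_1,\omega_2}^{\e}$ — exactly the kind of pointwise estimate carried out in the proof of Lemma 1 above, here specialized to the single-particle measure and to the already-identified confinement ball.
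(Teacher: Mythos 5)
Your strategy for \eqref{42} and \eqref{44} matches the paper's in spirit: restrict by confinement to $\{T_B>\tau\}$, note that configurations with $\lambda_{\omega_1,\omega_2}^{\e}$ above a threshold contribute a factor $\exp\{-(c(d,\mu+\nu)+\delta)\tau\}$ which is $o(S_t^{\mu,\nu})$ by \eqref{41}, and stratify using the pointwise estimates from the method of enlargement of obstacles. The paper carries this out by bounding the contribution of $\{\lambda(B(0,R_0))+\e_1<\lambda_{\omega_1,\omega_2}^{\e}\le 2c(d,\mu+\nu)\}$ via the variational problem $\inf_U\{(\lambda(B(0,R_0))+\e_1)\vee(\lambda(U)-2\e^{\rho})+(\mu+\nu)|U|\}$, whose infimum exceeds $c(d,\mu+\nu)+C_3\e_1^2$; your sketch is a reasonable summary of that.

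Your proof of \eqref{43}, however, has a genuine gap. You aim to show $\|\phi_{\omega_1,\omega_2}^{\e}-\varphi_0\|_{L^2}^2\le C\e_1$ via a spectral-gap argument, then pass to a level-set bound by Chebyshev. Two problems. First, $\phi_{\omega_1,\omega_2}^{\e}$ is the ground state of $-\tfrac12\Delta+V_{\e}(\cdot,\omega_1)$ on $B\setminus S_{\e}(\omega_2)$, while $\varphi_0$ is the ground state of $-\tfrac12\Delta$ on $B(0,R_0)$: these are different operators on different domains, so an eigenbasis expansion and spectral-gap estimate is not available off the shelf. Second, and more decisively, your Chebyshev step yields an error term of order $s^{-2}\e_1$, which you then claim can be ``reabsorbed'' into $C_2(\e_1+s)$. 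It cannot: for any fixed $\e_1>0$ the quantity $s^{-2}\e_1$ blows up as $s\to 0$, whereas $C_2(\e_1+s)$ stays bounded. The lemma as stated is claimed for all $s>0$, and your bound only matches it in a restricted range roughly $s\gtrsim\e_1^{1/3}$, which is a strictly weaker statement.

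The paper sidesteps both issues by a direct Faber--Krahn argument on the superlevel set, using no eigenfunction comparison at all. One checks $\|(\phi_{\omega_1,\omega_2}^{\e}-s)_+\|_2\ge 1-s|B\setminus S_{\e}|^{1/2}$ and that the Dirichlet energy (including the potential term) of $(\phi_{\omega_1,\omega_2}^{\e}-s)_+$ is bounded by $\lambda_{\omega_1,\omega_2}^{\e}$. Using $(\phi_{\omega_1,\omega_2}^{\e}-s)_+$ as a test function for the Dirichlet Laplacian on $\{\phi_{\omega_1,\omega_2}^{\e}>s\}$ gives
\begin{equation*}
\lambda(\{\phi_{\omega_1,\omega_2}^{\e}>s\})\le \lambda_{\omega_1,\omega_2}^{\e}\left(1-s|B\setminus S_{\e}|^{1/2}\right)^{-2},
\end{equation*}
and combining with the Faber--Krahn inequality $\lambda(\{\phi>s\})\ge\lambda_d(\omega_d/|\{\phi>s\}|)^{2/d}$ and \eqref{42} yields directly
\begin{equation*}
|\{\phi_{\omega_1,\omega_2}^{\e}>s\}|\ge\omega_d\left(\frac{\lambda_d}{\lambda(B(0,R_0))+\e_1}\right)^{d/2}\left(1-s|B\setminus S_{\e}|^{1/2}\right)^d,
\end{equation*}
which is exactly of the form $|B(0,R_0)|(1-C_2(\e_1+s))$ for $C_2$ large. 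This produces the linear dependence in $s$ and $\e_1$ simultaneously and with no domain-matching issues, so you should replace your stability-of-eigenfunctions argument with this one.
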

\begin{proof}
By the confinement property, we can restrict our consideration on $\{T_{B} > \tau \}$. 
Furthermore, we can admit another restriction $\lambda_{\omega_1,\omega_2}^{\e} \le 2c(d,\mu+\nu)$, since
\begin{equation*}
 \begin{split}
  &\E^1_{\mu \e^{-d}} \otimes \E^2_{\nu \e^{-d}} \otimes E_0 
  \left[v(\tau) \,;\, \lambda_{\omega_1,\omega_2}^{\e} > 2c(d,\mu+\nu) \right]\\
  \le \,&\E^1_{\mu \e^{-d}} \otimes \E^2_{\nu \e^{-d}}\left[c(d)\left(1+(\lambda_{\omega_1,\omega_2}^{\e}\tau)^{d/2}\right) \right.
  \exp\set{-\lambda_{\omega_1,\omega_2}^{\e}\tau} \,;\\
  &\hspace{187pt}\lambda_{\omega_1,\omega_2}^{\e} > 2c(d,\mu+\nu) \Bigr]\\
  \le \,& c'(d) \exp\set{-\frac{3}{2}c(d,\mu+\nu)\tau}\\
  = \,&o(S_t^{\mu,\nu}) \quad (t \to \infty).\label{45}
 \end{split}
\end{equation*}
Here we have used (3.1.9) of \cite{Szn98} in the first line, $\sup_{\lambda>0}\{(1+\lambda^{d/2})\exp\{-\lambda/4\}\}<\infty$ 
in the second line and \eqref{41} in the last line. 
On the other hand, it follows from the method of enlargement of obstacles that
\begin{equation*}
 \begin{split}
  \lambda_{\omega_1,\omega_2}^{\e}\wedge 2c(d,\mu+\nu) &\ge \lambda_{\omega_1,\omega_2}^{\e}(B \setminus \clD)\wedge 2c(d,\mu+\nu)-\e^{\rho}\\
  &\ge \lambda_{\omega_1,\omega_2}^{\e}(\T \setminus \clD)\wedge 2c(d,\mu+\nu)-\e^{\rho}\\
  &\ge \lambda_{\omega_1,\omega_2}^{\e}(\U)\wedge 2c(d,\mu+\nu)-2\e^{\rho}\label{46}
 \end{split}
\end{equation*}
with $\U=(\T\cap \O) \setminus \clD$. Therefore, for any $\e_1>0$ we have
\begin{equation*}
 \begin{split}
  &\E^1_{\mu \e^{-d}} \otimes \E^2_{\nu \e^{-d}} \otimes E_0 
  \left[v(\tau) \,;\, \lambda(B(0,R_0))+\e_1 < \lambda_{\omega_1,\omega_2}^{\e} \le 2c(d,\mu+\nu) \right]\\
  \le \,&\E^1_{\mu \e^{-d}} \otimes \E^2_{\nu \e^{-d}}\biggl[c(d)\left(1+(2c(d,\mu+\nu)\tau)^{d/2}\right)\\ 
  &\hspace{77pt} \exp\set{-((\lambda(B(0,R_0))+\e_1)\vee(\lambda(\U)-2\e^{\rho})) \tau} \biggr]\\
  \le \,&\sum_{U,V}\exp\set{-((\lambda(B(0,R_0))+\e_1)\vee(\lambda(U)-2\e^{\rho}))\tau+o(\tau)}\P_{\mu \e^{-d}} \otimes \P_{\nu \e^{-d}}(G_{U,V})\\
  \le \,&\exp\set{-\inf_{U:{\rm open}}\set{(\lambda(B(0,R_0))+\e_1)\vee(\lambda(U)-2\e^{\rho})+(\mu+\nu)|U|}\tau+o(\tau)}\label{47}
 \end{split}
\end{equation*}
as in (4.5.81) of \cite{Szn98}. The infimum in the last line turns out to be larger than 
\begin{equation*}
 c(d,\mu+\nu)+C_3(d,\mu+\nu)\e_1^2\label{48} 
\end{equation*}
after some calculation and this shows the existence of $\Omega_t(\e_1)$ with the properties \eqref{42} and \eqref{44}. 

Next, we shall prove that \eqref{43} holds on this $\Omega_t(\e_1)$. Let us start by two obvious estimates 
\begin{equation*}
 \|(\phi_{\omega_1,\omega_2}^{\e}-s)_{+}\|_2 
 \ge \|\phi_{\omega_1,\omega_2}^{\e}\|_2 - \|\phi_{\omega_1,\omega_2}^{\e}\wedge s\|_2 \ge 1-s|B \setminus S_{\e}|^{1/2}\label{49}
\end{equation*}
and
\begin{equation*}
 \begin{split}
  &\frac{1}{2}\int |\nabla (\phi_{\omega_1,\omega_2}^{\e}(x)-s)_{+}|^2 dx + \int V(x,\omega_1) (\phi_{\omega_1,\omega_2}^{\e}(x)-s)_{+}^2 dx\\
  \le\,& \frac{1}{2}\int |\nabla\phi_{\omega_1,\omega_2}^{\e}(x)|^2 dx + \int V(x,\omega_1) \phi_{\omega_1,\omega_2}^{\e}(x)^2 dx\\
  =\, &\lambda_{\omega_1,\omega_2}^{\e}.\label{50}
 \end{split}
\end{equation*}
Combining these estimates we find 
\begin{equation*}
 \begin{split}
  &\lambda(\{\phi_{\omega_1,\omega_2}^{\e}>s\})\\
  =\,&\inf_{\varphi \in H_0^1(\{\phi_{\omega_1,\omega_2}^{\e}>s\}),\, \|\varphi\|_2=1}
  \set{\frac{1}{2}\int |\nabla\varphi(x)|^2 dx + \int V(x,\omega_1) \varphi(x)^2 dx }\\
  \le\,& \lambda_{\omega_1,\omega_2}^{\e}\left(1-s|B \setminus S_{\e}|^{\frac{1}{2}} \right)^{-2}.\label{51}
 \end{split}
\end{equation*}
On the other hand, we also have a converse estimate 
\begin{equation*}
 \lambda(\{\phi_{\omega_1,\omega_2}^{\e}>s\}) 
 \ge \lambda_{d}\left(\frac{\omega_d}{|\{\phi_{\omega_1,\omega_2}^{\e}>s\}|} \right)^{\frac{2}{d}}\label{52}
\end{equation*}
from Faber-Krahn's inequality (see e.g.\ \cite{Ber03}). 
Therefore it follows for $(\omega_1,\omega_2) \in \Omega_t(\e_1)$ that 
\begin{equation*}
 \begin{split}
  |\{\phi_{\omega_1,\omega_2}^{\e}>s\}| \ge \,&\omega_d\left(\frac{\lambda_d}{\lambda(\{\phi_{\omega_1,\omega_2}^{\e}>s\})} \right)^{\frac{d}{2}}\\
  \ge \,&\omega_d\left(\frac{\lambda_d}{\lambda_{\omega_1,\omega_2}^{\e}} \right)^{\frac{d}{2}}\left(1-s|B \setminus S_{\e}|^{\frac{1}{2}} \right)^{d}\\
  \ge \,&\omega_d\left(\frac{\lambda_d}{\lambda(B(0,R_0))+\e_1} \right)^{\frac{d}{2}}\left(1-s|B \setminus S_{\e}|^{\frac{1}{2}} \right)^{d}
  \label{53} 
 \end{split}
\end{equation*}
and our claim \eqref{43} follows.
\end{proof}
The second is the set of points in $B$ which keep certain distance from $\partial B$ and obstacles. 
\begin{lmm}
If we define the set
\begin{equation*}
 \mathscr{W}=\set{x \in B \,;\, {\rm dist}(x,\partial B \cup \clD \cup \overline{\B})>3a\e }.\label{56}
\end{equation*}
for $a>0$, then we have 
\begin{equation*}
 |\mathscr{W}| \ge \omega_d R_0^d - C_4(d,\mu+\nu)\e^{\frac{\alpha \wedge \alpha_1}{2} \wedge \kappa} \label{57}
\end{equation*}
for large enough $t$. Here $\alpha_1$ is the same constant as in \cite{F1}. 
\end{lmm}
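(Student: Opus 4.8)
The plan is to identify $\mathscr{W}$, up to lower-order corrections, with the set $\V=(\T\cap\O)\setminus(\clD\cup\B)$ from the preceding lemma, and then to bound $|\V|$ from below by combining the spectral and volume controls of \cite{F1} with the reinforcement of Faber--Krahn's inequality of \cite{F1}. Throughout I would work for $(\omega_1,\omega_2)$ in the confinement event $\{T_B>\tau\}$ and, as in the preceding lemma taken with parameter $\e_1:=\e^{\alpha\wedge\alpha_1}$, on the $\omega$-event of $Q_{t,\e}^{\mu,\nu}$-probability tending to $1$ on which $\lambda_{\omega_1,\omega_2}^\e\le\lambda(B(0,R_0))+\e_1$, where $R_0:=R_0(d,\mu+\nu)$; this localisation is all that is used downstream.

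First I would carry out a geometric reduction. Let $B^-$ be the ball concentric to $B$ whose radius is that of $B$ diminished by $3a\e$, so that $\mathrm{dist}(x,\partial B)>3a\e$ for every $x\in B^-$ and hence
\[
\mathscr{W}\ \supseteq\ (B^-\cap\O)\setminus(\clD\cup\overline{\B})^{(3a\e)},
\]
where $(\cdot)^{(r)}$ denotes the open $r$-neighbourhood. Since $\clD$ and $\B$ are unions of boxes of a grid whose mesh is of larger order than $\e$, the $3a\e$-dilation of $\clD\cup\overline{\B}$ reaches only adjacent boxes, so $|(\clD\cup\overline{\B})^{(3a\e)}\cap B|\le 3^d\,|(\clD\cup\overline{\B})\cap B^+|$ with $B^+$ the one-mesh dilation of $B$. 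Using $|B^-|\ge\omega_dR_0^d-c(d)a\e R_0^{d-1}$, $|B^+|\le\omega_dR_0^d+C(\e^{\kappa_2}+\e^{\beta})$ (the radius of $B$ being at most $R_0+\kappa_1\e^{\kappa_2}$ by Theorem 1 of \cite{F1}), that $|B\setminus\O|$ is of lower order, and that $\e$, $\e^{\kappa_2}$, $\e^{\beta}$ are all $O(\e^{\frac{\alpha\wedge\alpha_1}{2}\wedge\kappa})$ for the admissible parameters, the lemma is reduced to
\[
|\V\cap B^+|\ \ge\ \omega_dR_0^d-C\,\e^{\frac{\alpha\wedge\alpha_1}{2}\wedge\kappa}.
\]

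To prove this I would set $\U=(\T\cap\O)\setminus\clD\supseteq\V$. As in the proof of the preceding lemma, the spectral controls of \cite{F1} give $\lambda^\e_{\omega_1,\omega_2}(\U)\wedge 2c(d,\mu+\nu)\le\lambda^\e_{\omega_1,\omega_2}+2\e^{\rho}\le\lambda(B(0,R_0))+\e_1+2\e^{\rho}$, and since $V_\e\ge 0$ Faber--Krahn's inequality yields a matching lower bound on $|\U|$; together with the near-minimality $(\mu+\nu)|\U|+\lambda(\U)\le c(d,\mu+\nu)+C\e^{\alpha\wedge\alpha_1}$ furnished by Proposition 1 of \cite{F1} (the one-particle analogue of Proposition \ref{Proposition4}), this exhibits $\U$ as an $O(\e^{\alpha\wedge\alpha_1})$-almost minimiser of $U\mapsto(\mu+\nu)|U|+\lambda(U)$, whose minimisers are exactly the balls of radius $R_0$. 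The reinforcement of Faber--Krahn's inequality of \cite{F1} then places $\U$ within $C\e^{(\alpha\wedge\alpha_1)/2}$ in measure of some ball $B(z,R_0)$; since for such $\omega$ the surviving motion is confined to $B$ while the nearly full-measure super-level sets of $\phi_{\omega_1,\omega_2}^\e$ exhibited in the preceding lemma lie, after enlargement, inside $\U\cap B$, the balls $B(z,R_0)$ and $B$ overlap in a set of almost full volume and are therefore essentially concentric. Hence $|\U\cap B^+|\ge\omega_dR_0^d-C\e^{(\alpha\wedge\alpha_1)/2}$; finally the volume control of \cite{F1} gives $|\U\setminus\V|\le|\T\cap\O|\,\e^{\kappa}\le C\e^{\kappa}$ (a polylogarithmic factor being absorbed into a slight decrease of $\kappa$), and subtracting yields the displayed bound on $|\V\cap B^+|$.

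The hard part will be the assertion that the ball produced by the reinforced Faber--Krahn inequality is, up to the stated error, the confinement ball $B$ itself rather than some other ball elsewhere in $\T$: this is exactly where one must use the confinement property together with the localisation of $\phi_{\omega_1,\omega_2}^\e$ and the fact that $\clD$ and $\O$ are constructed around the surviving trajectory. The remaining ingredients — the $O(\e)$ boundary shell, the box-dilation bound for $(\clD\cup\overline{\B})^{(3a\e)}$, the Faber--Krahn lower bound on $|\U|$, and the transfer from $\U$ to $\V$ via the volume control — are routine.
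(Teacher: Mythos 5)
Your geometric reduction — controlling the $3a\e$-dilations of $\clD$ and $\overline{\B}$ through the mesoscopic box structure of side lengths $\sim\e^{\gamma}$ and $\sim\e^{\beta}$, together with the $O(\e^{\kappa_2})$ shell near $\partial B$ — parallels the paper's argument and is essentially correct. Where you diverge is in the core volume estimate. The paper simply invokes Proposition 2 of \cite{F1}, which already furnishes $|B\setminus\U|\le c_6(d,\mu+\nu)\e^{\alpha_1/2}$ for $\U=(\T\cap\O)\setminus\clD$; this is exactly the assertion you set out to re-derive. Your re-derivation (near-minimality of $\U$ via Proposition 1 of \cite{F1}, a Faber--Krahn lower bound on $|\U|$, and the reinforced Faber--Krahn inequality placing $\U$ near \emph{some} ball $B(z,R_0)$) is plausible up to that point, but the identification step you yourself flag as ``the hard part'' — that $B(z,R_0)$ coincides with the confinement ball $B$ up to the stated error — is not established by your sketch.

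In particular, the claim that the nearly full-measure superlevel sets of $\phi^{\e}_{\omega_1,\omega_2}$ lie ``after enlargement, inside $\U\cap B$'' is not a consequence of Lemma 2 of this paper: $\phi^{\e}_{\omega_1,\omega_2}$ is the eigenfunction on $B\setminus S_{\e}(\omega_2)$, and there is no immediate reason its superlevel sets should avoid $\clD$ or be contained in $\O$ — that is a nontrivial output of the method of enlargement of obstacles (spectral controls plus the construction of the confinement ball in \cite{F1}), not a by-product of the level-set volume bound of Lemma~2. Also, running Lemma 2 with $\e_1=\e^{\alpha\wedge\alpha_1}$ tending to $0$ requires a uniformity in $\e_1$ that the lemma as stated does not supply. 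The fix is simply to cite Proposition~2 of \cite{F1} in place of this re-derivation; with that, the remaining dilation and boundary-shell estimates close the argument as in the paper.
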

\begin{proof}
Firstly,  we have following estimate on slightly larger neighborhood of $\partial B$:
\begin{equation}
 \left|\set{x \in B \,;\, {\rm dist}(x,\partial B) \le \e^{\alpha} }\right|
 \le C_4(d,\mu+\nu)\e^{\alpha}\label{58}
\end{equation}
since $B$ has the radius in $[R_0,R_0+\kappa_1\e^{\kappa_2}]$. Next, we shall deal with the neighborhood of $\clD$. 
From Proposition 2 of \cite{F1}, we have 
\begin{equation*}
  |B \cap \clD | \le \left|B \setminus \U \right| \le  c_6(d,\mu+\nu) \e^{\frac{\alpha_1}{2}}\label{59}
\end{equation*}
where $c_6$ is the constant used in \cite{F1}. Now, let us recall that the density set $\D$ consists of boxes 
with side length $L^{n_{\gamma}} \in [\e^{\gamma},L\e^{\gamma})$ ($\alpha<\gamma < 1$, 
see section 3.1 of \cite{F1}). If we denote by $\D'$ the consisting boxes of $\D$ which intersects with $\partial B$, we have 
\begin{equation*}
 \set{x \in B \,;\, {\rm dist}(x,\D') \le 3a\e} \subset \set{x \in B \,;\, {\rm dist}(x,\partial B) \le \e^{\alpha} }\label{60}
\end{equation*}
for large enough $t$ since $\alpha<\gamma$. On the other hand, we know
\begin{equation}
 \begin{split}
  &\left| \set{x \in B \,;\, {\rm dist}(x,\overline{\D\setminus\D'}) \le 3a\e} \right|\\ 
  \le \,&\sum |3a\e \textrm{-neighborhood of each box of }\overline{\D\setminus\D'} |\\
  \le\, &\frac{(\e^{\gamma}+6a\e)^d}{\e^{d\gamma}}|\clD|\\
  \le\, &2|\clD|.\label{60b}
 \end{split}
\end{equation}
for large enough $t$ since $\gamma<1$. From \eqref{58}--\eqref{60b}, we get 
\begin{equation}
 \left|\set{x \in B \,;\, {\rm dist}(x,\partial B \cup \clD ) \le 3a\e }\right|
 \le C_4(d,\mu+\nu)\e^{\alpha \wedge \frac{\alpha_1}{2}}\label{61}
\end{equation}
making $C_4$ larger if necessary. 
Finally, since we know from the volume control of \cite{F1} that 
\begin{equation*}
 |B \cap \overline{\B} | \le \bigcup_{q \in [-2R_0-1,2R_0+1]^d \cap \Z^d} |(q+[0,1)^d) \cap \overline{\B} | 
 \le (4R_0+3)^d \e^{\kappa}\label{62}
\end{equation*}
and $\B$ consists of boxes with side length $L^{n_{\beta}} \in [\e^{\beta},L\e^{\beta})$ ($\alpha < \beta < 1$, 
see section 3.1 of \cite{F1}), we can show 
\begin{equation}
 \left|\set{x \in B \,;\, {\rm dist}(x,\partial B \cup \overline{\B} ) \le 3a\e }\right|
 \le C_4(d,\mu+\nu)\e^{\alpha \wedge \kappa}\label{63}
\end{equation}
as before, making $C_4$ larger if necessary. 

Combining \eqref{58}, \eqref{61} and \eqref{63}, the proof of Lemma 3 is completed. 
\end{proof}

{\flushleft{\it Proof of Theorem 3.}} We shall prove \eqref{7} first. Since $C$ has non-empty interior, 
we can assume $C=\overline{B}(0,r)$ for some $r>0$. Let us introduce the positive constant
\begin{equation*}
 a=\inf\set{u>2r\,;\, {\rm supp}W \cup K \subset \overline{B}(0,u)} \label{64}
\end{equation*}
and define $\mathscr{W}$ accordingly. 
From Lemma 2 and Lemma 3, it suffices to show $W_{t^{\sigma}\e^2}^{\e C}$ covers $\mathscr{W} \cap \{\phi_{\omega_1,\omega_2}^{\e}>C_5 s\}$ 
on $\Omega_t(\e_1,s)$ for any $\e_1>0$, $s>0$ and some appropriate constant $C_5(r,d,\mu+\nu)>1$. 
To this end, we introduce a covering of $\mathscr{W} \cap \{\phi_{\omega_1,\omega_2}^{\e}> s\}$ first. 
Let $B_q$ $ (q \in \Z^d)$ be the closed ball $\overline{B}(r\e/(2\sqrt{d})q,r\e/2)$ and 
\begin{equation*}
 {\mathcal I}(\omega_1,\omega_2) = \set{q \in \Z^d \,;\, B_q \cap \mathscr{W} \cap \{\phi_{\omega_1,\omega_2}^{\e}>s\} \neq \emptyset}\label{65}
\end{equation*}
so that
\begin{equation*}
 \bigcup_{q \in {\mathcal I}(\omega_1,\omega_2)} B_q \supset \mathscr{W} \cap \{\phi_{\omega_1,\omega_2}^{\e}>s\}.\label{66}
\end{equation*}
The cardinality of ${\mathcal I}(\omega_1,\omega_2)$ is uniformly bounded by some polynomial $p_1(t)$ which depends only on 
$d$ and $\mu+\nu$ since we always have 
\begin{equation*}
 {\mathcal I}(\omega_1,\omega_2) \subset \set{q \in \Z^d \,;\, B_q \cap \overline{B}(0,2R_0+1) \neq \emptyset}.\label{67}
\end{equation*}
Next we exclude $B_q$ which intersects with $\mathscr{W} \cap \{\phi_{\omega_1,\omega_2}^{\e}= s\}$. (We can prove 
$B_q \subset W_{t^{\sigma}\e^2}^{\e C}$ only when $B_q$ is included in $\{\phi_{\omega_1,\omega_2}^{\e}> s\}$.) 
Since there are no obstacles in $a\e$-neighborhood of $\clD \cup \overline{\B}$, $\phi^{\e}_{\omega_1,\omega_2}$ 
is the solution of the elliptic equation 
\begin{equation*}
 \left(\frac{1}{2}\Delta - \lambda^{\e}_{\omega_1,\omega_2}\right) \phi^{\e}_{\omega_1,\omega_2} =0 
 {\textrm{ on }} \set{x \in B \,;\, {\rm dist}(x, \partial B \cup \clD \cup \overline{\B})>a\e }.\label{68}
\end{equation*}
Moreover, by the definitions of $a$ and $\mathscr{W}$ we have 
\begin{equation*}
 \begin{split}
  \overline{B}(r\e/(2\sqrt{d})q,2r\e) &\subset \overline{B}(r\e/(2\sqrt{d})q,a\e)\\
  &\subset \set{x \in B \,;\, {\rm dist}(x,\partial B \cup \clD \cup \overline{\B})>a\e }\label{69}
 \end{split}
\end{equation*}
for such $q$. Therefore we can use the Harnack inequality (see Theorem 8.20 in \cite{GT01}) for $\phi^{\e}_{\omega_1,\omega_2}$ 
to get 
\begin{equation*}
 \begin{split}
  \sup_{B_q} \phi^{\e}_{\omega_1,\omega_2} 
  &\le \exp\set{{\rm const}(d)\left(\sqrt{d}+\sqrt{2\lambda^{\e}_{\omega_1,\omega_2}}\frac{r\e}{2}\right)}\inf_{B_q} \phi^{\e}_{\omega_1,\omega_2}\\
  &\le C_5 s \label{70}.
 \end{split}
\end{equation*}
with some constant $C_5(r,d,\mu+\nu)>1$. Here we have used the boundedness of $\lambda^{\e}_{\omega_1,\omega_2}$ 
in Lemma 1 and $B_q \cap \{\phi^{\e}_{\omega_1,\omega_2}=s\}\not= \emptyset$ in the second line. 
As a consequence, we have 
\begin{equation*}
 \mathscr{W} \cap \{\phi_{\omega_1,\omega_2}^{\e}>C_5s\}
 \subset \bigcup_{q \in {\mathcal J}(\omega_1,\omega_2)} B_q 
 \subset  \{\phi_{\omega_1,\omega_2}^{\e}>s\}\label{71}
\end{equation*}
where ${\mathcal J}(\omega_1,\omega_2)=\{q \in {\mathcal I}(\omega_1,\omega_2)\,;\, B_q \subset \{\phi_{\omega_1,\omega_2}^{\e}>s\}\}$. 
For a technical reason, we make a sequence $\{q_i(\omega_1,\omega_2)\}_{i=1}^{[p_1(t)]+1}$ of deterministic length $[p_1(t)]+1$ 
arranging all the points of ${\mathcal J}(\omega_1,\omega_2)$ redundantly. 

Now, we shall derive the upper bound on the probability 
\begin{equation}
 Q_{t,\e}^{\mu,\nu}\left( B_{q_i} \not\subset W_{t^{\sigma}\e^2}^{\e C} , \Omega_t(\e_1,s) \right).\label{72}
\end{equation}
Since $\{B_{q_i} \not\subset W_{t^{\sigma}\e^2}^{\e C}\}\subset\{H_{B_{q_i}}>t^{\sigma}\e^2\}$ by the definition of $B_q$, 
we can replace $\{B_{q_i} \not\subset W_{t^{\sigma}\e^2}^{\e C}\}$ by $\{H_{B_{q_i}}>t^{\sigma}\e^2\}$ in \eqref{72}. 
Hereafter, $\lambda_{\omega_1,\omega_2}^{\e,i}$ denotes the principal eigenvalue of $-1/2\Delta+V_{\e}(\,\cdot\,,\omega_1) $ 
on $H_0^1(B \setminus (S_{\e}(\omega_2) \cup B_{q_i}))$. 
Then, using Markov property and (3.1.9) in \cite{Szn98}, we find 
\begin{equation}
 \begin{split}
  &E_0\left[v(t\e^2) \,;\, T_B>t\e^2, H_{B_{q_i}}>t^{\sigma}\e^2\right]\\
  =\, &E_0\Bigl[v(t^{\sigma}\e^2)E_{Z_{t^{\sigma}\e^2}}\left[v(t\e^2-t^{\sigma}\e^2)\,;\,T_B>t\e^2-t^{\sigma}\e^2\right] \,;\\
  &\hspace{143pt}T_B>t^{\sigma}\e^2, H_{B_{q_i}}>t^{\sigma}\e^2\Bigr]\\
  \le \,&c(d)^2 \left(1+(\lambda_{\omega_1,\omega_2}^{\e,i}t\e^2)^{d/2}\right)^2 \\
  &\,\exp\set{-(\lambda_{\omega_1,\omega_2}^{\e,i}-\lambda_{\omega_1,\omega_2}^{\e})t^{\sigma}\e^2-\lambda_{\omega_1,\omega_2}^{\e}t\e^2}.\label{73}
 \end{split}
\end{equation}
Here we have used $\lambda_{\omega_1,\omega_2}^{\e,i} \ge \lambda_{\omega_1,\omega_2}^{\e}$ and 
$t^{\sigma}\e^2 \vee (t\e^2-t^{\sigma}\e^2) \le t\e^2$ in the last line. 
For the spectral shift $\lambda_{\omega_1,\omega_2}^{\e,i}-\lambda_{\omega_1,\omega_2}^{\e}$ in the last line, 
we have following lower bound. 
\begin{lmm}
When $\e_1>0$ and $\e>0$ are small enough, we have 
\begin{equation}
 \lambda_{\omega_1,\omega_2}^{\e,i}-\lambda_{\omega_1,\omega_2}^{\e} \ge C_6(r,d,\mu+\nu)s^2 h(\e)\label{74}
\end{equation}
for all $(\omega_1,\omega_2) \in \Omega_t(\e_1,s)$. Here $h$ is the function defined by 
\begin{equation*}
 \begin{split}
  h(\e)=\left\{
   \begin{array}{lr}
    \left(\log\frac{1}{\e}\right)^{-1} &(d=2),\\[8pt]
    \e^{d-2} &(d \ge 3).\label{75}
   \end{array}\right.
 \end{split}
\end{equation*}
\end{lmm}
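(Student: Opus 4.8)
The plan is to compare the Dirichlet form on $B\setminus(S_\e(\omega_2)\cup B_{q_i})$ with the one on $B\setminus S_\e(\omega_2)$ by producing a good trial function. The natural candidate is $\psi=(1-\theta_{q_i})\phi^\e_{\omega_1,\omega_2}$, where $\theta_{q_i}$ is a cutoff equal to $1$ on $B_{q_i}$ and supported in a slightly larger ball; this $\psi$ lies in $H^1_0(B\setminus(S_\e(\omega_2)\cup B_{q_i}))$ since $B_{q_i}$ is at distance at least $a\e$ from the obstacles (this is exactly why we passed to $q_i\in\mathcal J$), so the cutoff does not interact with $S_\e(\omega_2)$. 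Testing the quadratic form against $\psi$ and using the eigenvalue equation for $\phi^\e_{\omega_1,\omega_2}$ gives, after the standard IMS-type manipulation,
\begin{equation*}
 \lambda_{\omega_1,\omega_2}^{\e,i}\|\psi\|_2^2
 \le \lambda_{\omega_1,\omega_2}^\e\|\psi\|_2^2
 + \tfrac12\int |\nabla\theta_{q_i}|^2(\phi^\e_{\omega_1,\omega_2})^2\,dx,
\end{equation*}
so that
\begin{equation*}
 \lambda_{\omega_1,\omega_2}^{\e,i}-\lambda_{\omega_1,\omega_2}^\e
 \ge \frac{\lambda_{\omega_1,\omega_2}^\e \bigl(\|\phi^\e_{\omega_1,\omega_2}\|_2^2-\|\psi\|_2^2\bigr)}{\|\psi\|_2^2}
 - \frac{\int |\nabla\theta_{q_i}|^2(\phi^\e_{\omega_1,\omega_2})^2\,dx}{2\|\psi\|_2^2}.
\end{equation*}
Since $\|\psi\|_2^2\ge 1 - O(\e^d)$ by the smallness of $B_{q_i}$, this is not yet a lower bound of the right order; the naive estimate only produces a gap of order $\e^d$, which for $d\ge 3$ is too small and for $d=2$ is wrong. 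The correct device is to choose $\theta_{q_i}$ as the (rescaled) equilibrium potential of $B_{q_i}$ relative to the slightly larger ball $\overline B(r\e/(2\sqrt d)q_i,2r\e)$, i.e.\ the harmonic cutoff minimizing $\int|\nabla\theta|^2$; then $\int|\nabla\theta_{q_i}|^2$ equals (up to constants) the Newtonian capacity of a ball of radius $\sim\e$, which is of order $\e^{d-2}$ for $d\ge 3$ and of order $(\log(1/\e))^{-1}$ for $d=2$ — precisely $h(\e)$.

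The key point that turns this capacity into a genuine positive contribution is the lower bound on $\phi^\e_{\omega_1,\omega_2}$ on the relevant region. By the Harnack inequality applied exactly as in the derivation of \eqref{70} — the ball $\overline B(r\e/(2\sqrt d)q_i,2r\e)$ sits inside the region where $\phi^\e_{\omega_1,\omega_2}$ solves the elliptic equation, by the definition of $\mathscr W$ and the fact that $B_{q_i}\subset\{\phi^\e_{\omega_1,\omega_2}>s\}$ — we get $\phi^\e_{\omega_1,\omega_2}\ge c(r,d,\mu+\nu)\,s$ uniformly on $\overline B(r\e/(2\sqrt d)q_i,2r\e)$, with a constant depending only on $r$, $d$, $\mu+\nu$ through the uniform bound $\lambda^\e_{\omega_1,\omega_2}\le\lambda(B(0,R_0))+\e_1\le 2c(d,\mu+\nu)$ from Lemma 2. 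Feeding this into the variational characterization: the Rayleigh quotient of $\psi=(1-\theta_{q_i})\phi^\e_{\omega_1,\omega_2}$ exceeds that of $\phi^\e_{\omega_1,\omega_2}$ by at least
\begin{equation*}
 \frac{1}{2\|\psi\|_2^2}\Bigl(\inf_{\theta}\int|\nabla\theta|^2\,(\phi^\e_{\omega_1,\omega_2})^2\,dx\Bigr)
 - (\text{error from }\|\psi\|_2^2<1)
 \ge C_6(r,d,\mu+\nu)\,s^2\,h(\e),
\end{equation*}
where the first term is bounded below by $c\,s^2\,\mathrm{cap}(B_{q_i})\asymp s^2 h(\e)$ and the $\|\psi\|_2^2$-defect is only $O(\e^d)=o(h(\e))$, hence absorbed for $\e$ small. (One subtlety: for $d=2$ one should take the harmonic cutoff between $B_{q_i}$ of radius $r\e/2$ and the ball of radius $2r\e$; the ratio of radii is a fixed constant, so $\mathrm{cap}\asymp(\log(1/\e))^{-1}$ indeed. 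For $d\ge 3$ the outer radius is irrelevant and $\mathrm{cap}\asymp(r\e)^{d-2}$.)

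The main obstacle is making the capacitary lower bound honest rather than an order-of-magnitude heuristic: one must verify that the harmonic cutoff $\theta_{q_i}$ genuinely belongs to $H^1_0(B\setminus(S_\e(\omega_2)\cup B_{q_i}))$ after multiplication by $\phi^\e_{\omega_1,\omega_2}$ — which hinges on the $a\e$-buffer zone around obstacles guaranteed by $\mathscr W$ and by the definition of $a$ — and that the cross term $\int\nabla\theta_{q_i}\cdot\nabla\phi^\e_{\omega_1,\omega_2}\,\theta_{q_i}\phi^\e_{\omega_1,\omega_2}\,dx$ arising in the expansion is controlled: it is $O(\|\nabla\theta_{q_i}\|_2\cdot\|\phi^\e_{\omega_1,\omega_2}\|_{C^1(\overline B(\cdot,2r\e))}\cdot\e)$, and the interior gradient estimate for the elliptic equation (Theorem 8.32 of \cite{GT01}, say, combined with the Harnack bound) shows $|\nabla\phi^\e_{\omega_1,\omega_2}|\le C(d)\e^{-1}\sup\phi^\e_{\omega_1,\omega_2}$ on $B_{q_i}$, so the cross term is of the same order $h(\e)$ as the main term and can be absorbed by shrinking $C_6$. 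Once these two bookkeeping points are settled the lemma follows; everything else is the standard capacity computation.
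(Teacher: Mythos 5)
Your strategy is to produce a trial function $\psi=(1-\theta_{q_i})\phi^\e_{\omega_1,\omega_2}\in H^1_0\bigl(B\setminus(S_\e(\omega_2)\cup B_{q_i})\bigr)$ and compute its Rayleigh quotient. By the variational principle this bounds $\lambda^{\e,i}_{\omega_1,\omega_2}$ from \emph{above}: your first display, $\lambda^{\e,i}\|\psi\|_2^2\le\lambda^\e\|\psi\|_2^2+\tfrac12\int|\nabla\theta_{q_i}|^2(\phi^\e)^2$, is correct and says $\lambda^{\e,i}-\lambda^\e\le\tfrac12\int|\nabla\theta_{q_i}|^2(\phi^\e)^2/\|\psi\|_2^2$. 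The lemma, however, asserts a \emph{lower} bound on $\lambda^{\e,i}-\lambda^\e$, and your ``so that'' step reverses the inequality without justification. No refinement of the cutoff choice (equilibrium potential, $C^1$ gradient estimates, etc.) can repair this: the Rayleigh quotient of any single test function over\/estimates the infimum of the quadratic form, so this device alone can never certify that removing $B_{q_i}$ raises the ground-state energy by at least a prescribed amount. What your computation does correctly prove is the matching upper bound — that the spectral shift is $O(s^2 h(\e))$ — which is not what is claimed.

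The missing ingredient is the second eigenvalue and the spectral gap. The paper invokes Exercise~1 after Theorem~3.2.3 of \cite{Szn98}, which gives
\begin{equation*}
 \lambda^{\e,i}_{\omega_1,\omega_2}-\lambda^{\e}_{\omega_1,\omega_2}
 \;\ge\; \Bigl(1-\tfrac{\lambda^{\e}_{\omega_1,\omega_2}}{\mu^{\e}_{\omega_1,\omega_2}}\Bigr)
 \inf_{B_{q_i}}(\phi^{\e}_{\omega_1,\omega_2})^{2}\,\mathrm{cap}(B_{q_i}),
\end{equation*}
where $\mu^\e_{\omega_1,\omega_2}$ is the second eigenvalue on $B\setminus S_\e(\omega_2)$. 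The proof of that estimate expands the \emph{perturbed} ground state $\phi^{\e,i}$ in the spectral basis of the \emph{unperturbed} operator, $\phi^{\e,i}=\alpha\phi^\e+\psi^\perp$; the shift is then at least $(\mu^\e-\lambda^\e)(1-\alpha^2)$, and $1-\alpha^2=\|\phi^\e-\alpha\phi^{\e,i}\|_2^2$ is controlled from below by $(\inf_{B_{q_i}}\phi^\e)^2\mathrm{cap}(B_{q_i})$ because $\phi^\e-\alpha\phi^{\e,i}$ agrees with $\phi^\e$ on $B_{q_i}$. This is precisely the direction a test function for $\lambda^{\e,i}$ cannot supply. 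The paper then bounds $\mu^\e_{\omega_1,\omega_2}$ from below by $\mu(B(0,R_0+\kappa_1\e^{\kappa_2}))$ via Rayleigh--Ritz, and uses $\lambda(B(0,R_0))<\mu(B(0,R_0))$ to keep $1-\lambda^\e/\mu^\e$ bounded away from $0$ as $\e,\e_1\to 0$. Also note that the lower bound $\inf_{B_{q_i}}\phi^\e\ge s$ is automatic from the definition of $\mathcal J$ (which requires $B_{q_i}\subset\{\phi^\e>s\}$); the Harnack inequality you invoke here was already used upstream when passing from $\mathcal I$ to $\mathcal J$, and is not needed again in this lemma. Your identification $\mathrm{cap}(B_{q_i})\asymp h(\e)$ is correct and matches the paper.
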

\begin{proof}
From the Exercise 1) after Theorem 3.2.3 in \cite{Szn98}, the right hand side of \eqref{74} is larger than 
\begin{equation*}
 \left(1-\frac{\lambda_{\omega_1,\omega_2}^{\e}}{\mu_{\omega_1,\omega_2}^{\e}}\right)
 \inf_{B_{q_i}}(\phi_{\omega_1,\omega_2}^{\e})^2 {\rm cap}(B_{q_i}).\label{76}
\end{equation*}
Here $\mu_{\omega_1,\omega_2}^{\e}$ denotes the second smallest eigenvalue of $-1/2\Delta+V_{\e}(\,\cdot\,,\omega_1)$ on 
$H_0^1(B \setminus S_{\e}(\omega_2))$. If we denote by $\mu(U)$ the second smallest eigenvalue of $-1/2\Delta$ on $H_0^1(U)$, 
it easily follows from the Rayleigh-Ritz variational formula that $\mu(B(0,R_0+\kappa_1 {\e}^{\kappa_2})) \le \mu(B) \le \mu_{\omega_1,\omega_2}^{\e}$. 
($\kappa_1$ and $\kappa_2$ are the same constants as in Theorem 1 of \cite{F1}.) 
Therefore we find for any $(\omega_1,\omega_2) \in \Omega_t(\e_1)$ and small enough $\e$ that 
\begin{equation*}
 \lambda_{\omega_1,\omega_2}^{\e,i}-\lambda_{\omega_1,\omega_2}^{\e} 
 \ge \left(1-\frac{\lambda(B(0,R_0))+\e_1}{\mu(B(0,R_0+\kappa_1 {\e}^{\kappa_2}))}\right)s^2 C_7(d)h(r\e).\label{77}
\end{equation*}
This, together with the fact that 
\begin{equation*}
 \frac{\lambda(B(0,R_0))+\e_1}{\mu(B(0,R_0+\kappa_1 {\e}^{\kappa_2}))} \to \frac{\lambda(B(0,R_0))}{\mu(B(0,R_0))}<1 
 \quad \rm{as} \quad \e,\e_1 \to 0,\label{78}
\end{equation*}
completes the proof. 
\end{proof}
On the event $\{\lambda_{\omega_1,\omega_2}^{\e,i}\le 3c(d,\nu)\}$, the polynomial factor 
in the last line of \eqref{73} is uniformly bounded by $p_2(t)=c(d)^2 (1+(3c(d,\mu+\nu)t\e^2)^{d/2})^2$. 
Moreover, it follows from Lemma 2 of \cite{F1} that 
\begin{equation}
 \frac{1}{S_t^{\mu,\nu}}\E^1_{\mu \e^{-d}} \otimes \E^2_{\nu \e^{-d}} \left[\exp\set{-\lambda_{\omega_1,\omega_2}^{\e}t\e^2} \right]
 \le p_3(t)\label{79}
\end{equation}
for some polynomial $p_3(t)$ depending only on $d$ and $\mu+\nu$. 
Combining \eqref{73}, Lemma 4 and \eqref{79}, we have 
\begin{equation}
 \begin{split}
  &Q_{t,\e}^{\mu,\nu}\left( H_{B_{q_i}}>t^{\sigma}\e^2, \lambda_{\omega_1,\omega_2}^{\e,i}\le 3c(d,\nu) ,\Omega_t(\e_1,s) \right)\\
  \le &\, p_2(t)p_3(t)\exp\set{-{\rm const}(r,d,\mu+\nu)s^2 h(\e)t^{\sigma}\e^2},\label{80}
 \end{split}
\end{equation}
whose right hand side converges to zero faster than any polynomial provided that $\sigma>d/(d+2)$. 
The remaining part $\{\lambda_{\omega_1,\omega_2}^{\e,i}> 3c(d,\mu+\nu)\}$ is easier. Indeed, we have 
$\lambda(B(0,R_0))+\e_1 \le c(d,\mu+\nu)$ for $\e_1 \le |B(0,R_0)|$ and consequently
\begin{equation}
 \begin{split}
  &c(d)^2 \left(1+(\lambda_{\omega_1,\omega_2}^{\e,i}t\e^2)^{d/2}\right)^2 
  \exp\set{-(\lambda_{\omega_1,\omega_2}^{\e,i}-\lambda_{\omega_1,\omega_2}^{\e})t^{\sigma}\e^2-\lambda_{\omega_1,\omega_2}^{\e}t\e^2}\\
  \le&\, c(d)^2 \left(1+(\lambda_{\omega_1,\omega_2}^{\e,i}t\e^2)^{d/2}\right)^2 
  \exp\set{- \frac{2}{3}\lambda_{\omega_1,\omega_2}^{\e,i}t^{\sigma}\e^2-\lambda_{\omega_1,\omega_2}^{\e}t\e^2}\\
  \le&\, c'(d,\mu+\nu) \exp\set{- \frac{1}{3}\lambda_{\omega_1,\omega_2}^{\e,i}t^{\sigma}\e^2-\lambda_{\omega_1,\omega_2}^{\e}t\e^2}\\
  \le&\, c'(d,\mu+\nu) \exp\set{-c(d,\mu+\nu)t^{\sigma}\e^2-\lambda_{\omega_1,\omega_2}^{\e}t\e^2} \label{81}
 \end{split}
\end{equation}
on $\{\lambda_{\omega_1,\omega_2}^{\e,i}> 3c(d,\mu+\nu)\}\cap \Omega_t(\e_1,s)$. Here we have used 
\begin{equation*}
 \sup_{\lambda>3c(d,\mu+\nu),t>0}\set{\left(1+(\lambda t)^{d/2}\right)\exp\set{-\frac{1}{3}\lambda t^{\sigma}}}<\infty\label{82}
\end{equation*}
in the third line. Substituting \eqref{81} for \eqref{73} and using \eqref{79}, we find
\begin{equation}
 \begin{split}
  &Q_{t,\e}^{\mu,\nu}\left( H_{B_{q_i}}>t^{\sigma}\e^2, \lambda_{\omega_1,\omega_2}^{\e,i}> 3c(d,\nu) ,\Omega_t(\e_1,s) \right)\\
  \le&\, c'(d)p_3(t)\exp\set{-c(d,\mu+\nu)t^{\sigma}\e^2},\label{83}
 \end{split}
\end{equation}
whose right hand side converges to zero faster than any polynomial. 
Now that we have 
\begin{equation*}
 \begin{split}
  &Q_{t,\e}^{\mu,\nu}\left( \mathscr{W} \cap \{\phi_{\omega_1,\omega_2}^{\e}>s\} \not\subset W_{t^{\sigma}\e^2}^{\e C}, 
  \Omega_t(\e_1,s) \right)\\
  \le\, &Q_{t,\e}^{\mu,\nu}\left( \bigcup_{1 \le i \le[p_1(t)]+1} \set{H_{B_{q_i}}>t^{\sigma}\e^2}, \Omega_t(\e_1,s) \right)\\
  \le\, &\sum_{i=1}^{[p_1(t)]+1}Q_{t,\e}^{\mu,\nu}\left( H_{B_{q_i}}>t^{\sigma}\e^2, \Omega_t(\e_1,s) \right)\\
  \to &\, 0 \quad as \quad t \to \infty\label{84}
 \end{split}
\end{equation*}
from \eqref{80} and \eqref{83}, the proof of \eqref{7} is completed. 

Finally, we shall prove the lower estimate \eqref{8}. It is a consequence of following exponential estimate for the Wiener sausage 
\begin{equation}
 \lim_{t \to \infty} \frac{1}{t}\log E_0 \left[\exp\set{\lambda|W_t^C|}\right] = S(\lambda,r) \in (0,\infty)\label{85}
\end{equation}
which is due to van den Berg and T\'{o}th \cite{vdBT91}. 
Indeed, for fixed $\sigma<d/(d+2)$ and $\eta>0$, \eqref{85} yields a large deviation estimate
\begin{equation*}
 \begin{split}
  P_0\left( |W_{t^{\sigma}}^C|>\eta t^{\frac{d}{d+2}} \right) 
  \le\, &\exp\set{-\lambda\eta t^{\frac{d}{d+2}}+S(\lambda,r)t^{\sigma}+o\bigl(t^{\sigma}\bigr)}\\
  =\, &\exp\set{-2c(d,\mu+\nu)t^{\frac{d}{d+2}}+o\left(t^{\frac{d}{d+2}}\right)}\label{86}
 \end{split}
\end{equation*}
as $t \to \infty$, if we take $\lambda=2c(d,\mu+\nu)/\eta$. This shows \eqref{8}, since 
\begin{equation*}
 \begin{split}
  Q_t^{\mu,\nu}\left( |W_{t^{\sigma}}^C|>\eta t^{\frac{d}{d+2}} \right)
  \le\, &\frac{1}{S_t^{\mu,\nu}}P_0\left( |W_{t^{\sigma}}^C|>\eta t^{\frac{d}{d+2}} \right)\\
  \le\, &\exp\set{-c(d,\mu+\nu)t^{\frac{d}{d+2}}+o\left(t^{\frac{d}{d+2}}\right)}\label{87}
 \end{split}
\end{equation*}
in view of \eqref{41}. \hfill $\square$

{\scshape
\begin{flushright}
\begin{tabular}{l}
Division of Mathematics,\\ 
Graduate School of Science\\ 
Kyoto University, \\
Kyoto 606-8502, \\
Japan \\
{\upshape e-mail: fukusima@math.kyoto-u.ac.jp}\\
\end{tabular}
\end{flushright}
}

\end{document}